\documentclass{amsart}
\usepackage[colorlinks, linkcolor=blue,  citecolor=blue, urlcolor=blue]{hyperref}%
\usepackage{url}
\usepackage{verbatim}
\usepackage{subfigure}
\usepackage{graphicx}
\graphicspath{{figures/}}

\newcommand{\norm}[1]{\Vert#1\Vert}

\newcommand{\abs}[1]{\vert#1\vert}


\newtheorem{theorem}{Theorem}
\theoremstyle{definition}
\newtheorem{lemma}[theorem]{Lemma}
\newtheorem{definition}{Definition}

\theoremstyle{remark}

\newtheorem{remark}{Remark}


\DeclareMathOperator{\dv}{div}

\newcommand{\grad}{\nabla}

\newcommand{\bq}{\begin{equation}}
\newcommand{\eq}{\end{equation}}
\newcommand{\R}{\mathbb{R}}

\newcommand{\e}{\epsilon}
\newcommand{\bO}{\mathcal{O}}

\newcommand{\an}{\theta}
\newcommand{\tra}{\mathsf{T}}



\newcommand{\dt}{\rho}
\newcommand{\dx}{h}
\newcommand{\un}{u^{n}}
\newcommand{\unn}{u^{n+1}}
\newcommand{\wa}{\alpha}
\newcommand{\wb}{\beta}

\newcommand{\IL}{\Delta_\infty}
\newcommand{\pLap}{\Delta_p}
\newcommand{\Lap}{\Delta}
\newcommand{\MC}{\Delta_1}
\newcommand{\grhs}{g}

\begin{document}

\title[Finite difference methods for {I}finity {L}aplace equation]
{Finite difference methods for the {I}nfinity {L}aplace and $p$-{L}aplace equations}
\author{Adam M. Oberman}
\address{Department of Mathematics, Simon Fraser University}
\email{aoberman@sfu.ca}
\date{\today} 
\begin{abstract}
We build convergent discretizations and semi-implicit solvers for the Infinity Laplacian and the game theoretical $p$-Laplacian.  The discretizations simplify and generalize earlier ones. We prove convergence of the solution of the Wide Stencil finite difference schemes to the unique viscosity solution of the underlying equation.   We build a semi-implicit solver, which solves the Laplace equation as each step.  It is fast in the sense that the number of iterations is independent of the problem size.
This is an improvement over previous explicit solvers, which are slow due to the CFL-condition.  
\end{abstract}
\keywords{Nonlinear Partial Differential Equations, Infinity Laplace, Semi-Implicit Solver, Viscosity Solutions, $p$-Lapalcian, Random Turn Games}
\thanks{The author is grateful to Selim Esedoglu for suggesting applying Smereka's work on the semi-implicit iteration to this equation}
\maketitle

\section{Introduction}
The Infinity Laplacian equation is at the interface of the fields of analysis, nonlinear elliptic Partial Differential Equations (PDEs), and probabilistic games.  It was first studied in the late 1960s by the Swedish mathematician Gunnar Aronsson \cite{AronssonIL1,AronssonIL2,AronssonIL3},
motivated by classical analysis problem of building Lipschitz extensions of a given function. 
Aronsson found non-classical solutions, but a rigorous theory of weak solutions was not yet available.  It took a few decades until analytical tools were developed to study the equation rigorously, and computational tools were developed which made numerical solution of the equation possible.

In the last decade, PDE theorists established existence and uniqueness, and regularity results.
The theory of viscosity solutions~\cite{CIL} is the appropriate one for studying weak solutions to the PDE.  But the general uniqueness theory did not apply to this very degenerate equation, so proving uniqueness required a new approach.  The first uniqueness result was due to Jensen~\cite{JensenUniq}, followed by a different proof by Barles and Busca~\cite{BarlesBusca}.  
Later, the connection with finite difference equation was exploited by Armstrong and Smart, and they were to give a short uniqueness proof for the PDE \cite{ArmstrongSmartUniqueness} .
The differentiability of solutions remained an open question for some time.  The first result was obtained by \cite{SavinRegIL} in two dimensions, followed by \cite{EvansSavinRegIL}, and \cite{EvansSmartDiffIL} and \cite{EvansSmartAdjointAndFD} in general dimensions.

The first convergent difference scheme was presented in~\cite{ObermanIL}.  
A numerical scheme using the extension property can be found in LeGruyer~\cite{LeGruyer}.  Two different numerical methods were derived in~\cite{EvansSmartAdjointAndFD}, one adapted the monotone scheme in~\cite{ObermanIL} to the standard Infinity Laplacian (which is homogeneous degree two in $\grad u$), the second, quite different, used the variational structure of a regularized PDE.

Earlier work by LeGruyer~\cite{LeGruyerOlder} proved uniqueness for a related finite difference equation.  The proof is a generalization of the uniqueness proof for linear elliptic finite difference schemes from~\cite{MotzkinWasow}.  

A group of probabilists, Peres-Shramm-Sheffield-Wilson~\cite{PeresEtcTugofWar} studying a randomized version of a marble game called Hex found a connection with the Infinity Laplacian equation.   This connection gives an interpretation of the equation as a two player random game.   
This is related to work by Kohn-Serfaty~\cite{KohnSerfatyMC} who found an interpretation of the equation for motion of level sets by mean curvature, \cite{OSnum} \cite{EvansSpruck} as a deterministic two player game.
This equation was also interpreted as a nonlinear average~\cite{ObermanMC} for the purpose of finite difference schemes.    Deterministic game interpretations for more general PDEs followed in ~\cite{KohnSerfatyGeneralGames}.  The connection between these various game interpretations was further studied in~\cite{EvansGames}.

The rich connection between games, finite difference schemes, and nonlinear elliptic PDEs is now much better understood.  There have been a number of works in this area, in particular on the game theoretical p-Laplacian. The probabilistic games interpretation can be found in~\cite{PeresSheffield} (see also~\cite{ManfrediPLap}).  Related works include biased games which corresponds to a gradient term~\cite{ArmstrongSSMixedIL}

This article will further exploit the connection between games, finite difference schemes, and nonlinear elliptic PDE, by building convergent finite difference schemes which are consistent with the game interpretation. 
The existence and uniqueness results are now established, efficient numerical solution of the equation remains a challenge.  The original convergent scheme proposed in~\cite{ObermanIL} converged, but is not efficient: as the grid size grows, so does the number of iterations required to find the solution.   This article improves and simplifies the original discretization, and also finds fast solution methods.  It also generalizes the scheme and the solvers to the game-theoretical $p$-Laplacian, which is a convex combination of the Laplacian and the Infinity Laplacian.

\subsection{Introduction to numerical methods for degenerate elliptic PDEs}
There are two major challenges in building numerical solvers for nonlinear and degenerate elliptic Partial Differential Equations (PDEs).  The first challenge is to build convergent approximations, usually by finite difference schemes.  The second challenge is to build efficient solvers.

The approximation theory developed by Barles and Souganidis~\cite{BSNum}  provides criteria for the convergence of approximation schemes:  monotone, consistent, and stable schemes converge to the unique viscosity solution of a degenerate elliptic equation.  But this work does not indicate how to build such schemes, or how to produce fast solvers for the schemes.  It is not obvious how to ensure that schemes satisfy the comparison principle.  The class of schemes which for which this property holds was identified in~\cite{ObermanSINUM}, and were called \emph{elliptic}, by analogy with the structure condition for the PDE.     

An important distinction for this class of equations is between first order (Hamilton-Jacobi) equations, and the second order (nonlinear elliptic) case.  The theory of viscosity solutions~\cite{CIL} covers both cases, but the numerical methods are quite different. In the first order case, where the method of characteristics is available, there are some exact solutions formulas (e.g. Hopf-Lax) and there is a connection with one dimensional conservation laws~\cite{EvansBook}.  The second order case has more in common with divergence-structure elliptic equations, but because of the degeneracy or nonlinearity, many of the tools from the divergence-structure case (e.g. finite elements, multi grid solvers) have not been successfully applied.

In the first order case, there is much more work on discretizations and fast solvers.
For Hamilton-Jacobi equations, which are first order nonlinear PDEs, monotonicity is necessary for convergence. Early numerical papers studied explicit schemes for time-dependent equations on uniform grids~\cite{CrandallLionsNum, SougNum}.   These schemes have been extended to higher accuracy schemes,  which include second order convergent methods, the central schemes~\cite{LinTadmorHJ}, as well as higher order interpolation methods, the ENO schemes~\cite{OsherShuENO}.  Semi-Langrangian schemes take advantage of the method of characteristics to prove convergence~\cite{FalconeSemiLagrangian}.  These have been extended to the case of differential games~\cite{FalconeBardiGames}. 
Two classes of fast solvers have been developed, fast marching~\cite{FastMarching}, and  fast sweeping~\cite{FastSweeping}, The fast marching and fast sweeping methods give fast solution method for first order equations: both take advantage of the method of characteristics, which is not available in the second order case.  

There is much less work in the second order degenerate elliptic equations.  
The equation for motion by mean curvature~\cite{OSnum},~\cite{EvansSpruck} has been extensively studied.
There is an enormous literature on this equation, but we just closely related references.  The connection with games was already mentioned above.  Numerical schemes include~\cite{CarliniFalconeFerrettMC} and~\cite{FalconeFerrettiCarliniMC} .  
In the case of motion by mean curvature, the equation is time-dependent, so a fast solver would allow larger time steps.  For this equation, a semi-implicit solver has been built by Smereka~\cite{Smereka}.  The idea from the Smereka paper will be adapted in this work to build fast solvers for Infinity Laplace.
Another equation in this class is the Hamilton-Jacobi-Bellman equations, for the value function of a stochastic control problem.  Applications include portfolio optimization and option pricing in mathematical finance.
Numerical works include the early paper~\cite{LionsMercierHJB} and~\cite{FalconeCamilliHJB}, 
and a paper on fast solvers~\cite{ZidaniFastHJB}.

For uniformly elliptic PDEs, monotone schemes are not \emph{necessary} for convergence (for example most higher order Finite Element Methods are not monotone).  
But for fully nonlinear or degenerate elliptic, the only convergence proof currently available requires monotone schemes.   One way to ensure monotone schemes is to use Wide Stencil Finite difference schemes, this has been done for the equation for motion by mean curvature,~\cite{ObermanMC}, for the Infinity Laplace equation~\cite{ObermanIL}, for functions of the eigenvalues~\cite{ObermanEigenvalues}, for Hamilton-Jacobi-Bellman equations~\cite{ZidaniWideHJBLong},~and for the convex envelope~\cite{ObermanCEnumerics}.  
Even for linear elliptic equations, a wide stencil scheme maybe necessary for to build a monotone scheme~\cite{MotzkinWasow}.
In some cases, simple finite difference schemes, with minor medications, can give good results, as is the case for the Monge-Amp\`ere equation~\cite{BenamouFroeseObermanMA}.  But we show below that simple finite difference schemes are not convergent for the Infinity Laplace equations.   

The second challenge, which is quite distinct from the first, is to build \emph{solvers} for the finite difference schemes.  For fixed values of $\dx$, the finite difference scheme is a finite dimensional nonlinear algebraic equation which must be solved.  Building solvers demands very different techniques, and little progress has been made, in part, due to the fact that the discrete equations can be non-differentiable, which precludes the use of the Newton's method.   To date, the only general solver available is a fixed point iteration, which corresponds to solving the parabolic version of the equation for long time.  This method is restricted by a nonlinear version of the CFL condition~\cite{ObermanSINUM}, which means the number of iterations required to solve the equation increases with the problem size.   For the Monge-Amp\`ere equation, fast solvers have been built using Newton's method~\cite{ObermanFroeseMATheory}~\cite{ObermanFroeseFast}, but this equation has a different structure (convex, differentiable) from the Infinity Laplace (or the $p$-Laplace) equation.

\subsection{Contribution of this work}
The first contribution of this work is to build a provably convergent discretization of the operator.  The issue here is to ensure that the discretization convergences (in the limit of the discretization parameters going to zero) to the unique viscosity solution of the PDE.   Simply using standard finite differences fails to converge, as shown below. 

The appropriate notion of weak solutions for the PDE is provided by viscosity solutions~\cite{CIL, CrandallTour}.  The only schemes which can be proven to converge to viscosity solutions are monotone schemes~\cite{BSNum}; these schemes satisfy the maximum principle at the discrete level~\cite{ObermanSINUM}.   
For the variational $p$-Laplacian, Galerkin Finite Element methods could be used.  But the game-theoretical version is not a divergence structure operator, so there is not a natural version of weak solutions.
Monotone schemes can be proven to converge for the game-theoretical p-Laplacian~\eqref{pLapId}.  We prove convergence of the solution of the Wide Stencil finite difference schemes to the unique viscosity solution of the underlying equation~\eqref{pLapId}~\eqref{D}. 

The second contribution of this work is to build fast solvers for~\eqref{pLapId}.
There are two reasonable ways to quantify the notion of a fast solver.
The first notion of speed is absolute: the number of operations to solve the equation the should be proportional to the problem size.
The second notion of speed is relative: we compare the speed of our solvers to the speed of solvers for a related but easier problem.  Here, it is natural to compare with the solution speed of the Laplace equation.  

Explicit solvers are available and simple to implement, but they are not fast. 
Any monotone scheme can be solved using an iterative, explicit method~\cite{ObermanSINUM}.  The explicit method can be interpreted as a Gauss-Seidel solver, or the forward Euler method for the equation $u_t = \pLap$.  However the time step for the Euler method is $\bO(\dx^2)$, where $\dx$ is the spatial resolution.  The explicit  method is not fast because the number of iterations required for it to converge is $\bO(1/\dx^2)$, which increases with the problem size.

The method we propose is semi-implicit, with the implicit step given by solving the Laplace equation.   The Laplace equation can be solved in $\bO(N)$ operations, using Fast Fourier Transforms, or $\bO(N\log N)$, using sparse linear algebra.   Thus, to the extent needed for our rather coarse analysis, both notions of speed coincide, provided the solution is obtained in a finite (small) number of iterations.  

\subsection{The setting for the PDE}
This work is concerned with the efficient numerical solution of a nonlinear, degenerate elliptic Partial Differential Equation (PDE), the normalized Infinity Laplacian.  The PDE operator is given by
\bq\label{IL} \tag{IL}
\IL u = \frac{1}{\abs{\grad u }^2}\sum_{i,j=1}^d u_{x_i} u_{x_i x_j} u_{x_j}
\eq
where $u(x) :\R^d \to \R$.  

We also study a closely related PDE, the game theoretical p-Laplacian,
 which interpolates between the $1$-Laplacian, $\MC$,  
 \bq\label{MC}\tag{MC}
\MC u = \abs{\grad u} \dv( \grad u / \abs{\grad u})
= \Lap u - \IL u
\eq
and the infinity Laplacian, $\IL$.
Expanding the $\MC$ operator above leads to the identity
\bq\label{LapILMC}
\Lap = \IL + \MC,
\eq
which we record for future use.
The game theoretical $p$-Laplacian is the $p$ weighted average of the $1$- and $\infty$-Laplacians,
\begin{align}\label{pLapId}\tag{pLap}
\pLap =  \frac 1 p \MC + \frac 1 q \IL, \quad p^{-1} + q^{-1} = 1.
\end{align} 
This is consistent with the definitions given in the probabilistic games interpretation~\cite{PeresSheffield} (see also~\cite{ManfrediPLap}).
The normalized versions of the operators are also used in image processing~\cite{CasellesImage}, \cite{CongShapePLap}, \cite{SapiroWarping}.  

Special cases occur for $p=1$ and $\infty$, as above, and for $p=2$ we obtain
$
\Delta_2 = \frac 1 2 \Delta.
$
Using the identity~\eqref{LapILMC} we can also write
\bq\label{plap2}
\pLap = \wa \Lap + \wb \IL, \quad \wa = 1/p,~ \wb = (p-2)/p
\eq
Equation~\eqref{plap2} will be used for $p \in [2,\infty]$.  If we were to consider the case $p \in [1,2]$, the equation above is not a positive combination of the operators.  Instead, for the case 
$p \in [1,2]$, the corresponding representation would be a convex combination of the monotone discretization of $\MC$ and the Laplacian.  Here we will focus on the case where the Infinity Laplace operator is active.

We consider the Dirichlet problem for the operator, in a domain $\Omega \subset \R^d$, with a given right hand side function~$\grhs$.
\begin{align}
\label{PDE}\tag{PDE}
\pLap
u(x) &= \grhs(x),  &\text{ for } x \in \Omega,
\\
\label{D}\tag{D}
u(x) &= h(x), & \text{ for } x \in \partial\Omega.
\end{align}
Here $\grhs$ represents a running cost for a probabilistic game~\cite{PeresSheffield}. When $\grhs = 0$, the operator coincides with the variational $p$-Laplacian.   The relationship between the game theoretical $p$-Laplacian and the variational $p$-Laplacian is given below. 


\subsection{Failure of the standard finite difference scheme}

Here we motivate the need for a convergent scheme, by showing that the standard finite difference scheme fails to converge.  

A natural scheme is given by standard finite differences, along with a small regularization for the norm of the gradient.  For this we use standard centered finite differences for $u_{xx}, u_{yy}, u_{x}, u_y$, and the with the symmetric scheme for $u_{xy}$,
\begin{align*}
u_{x}(x,y)  =&  \frac{1}{2h} \left  (
u(x+h,y) - u(x-h,y)
\right ) + \bO(h^2),
\\
u_{xx}(x,y)  =& \frac{1}{h^2} \left  (
u(x+h,y) -2u(x,y) + u(x-h,y)
\right ) + \bO(h^2),
\\
u_{xy}(x,y) =& 
+\frac{1}{4h^2} \left  (
u(x+h,y+h) + u(x-h,y-h) 
\right ) 
\\
&
- 
\frac{1}{4h^2} \left  (
u(x-h,y+h) + u(x+h,y-h)
\right ) 
+ \bO(h^2)
\end{align*}
and similarly for the $u_y, u_{yy}$ terms.  In order to regularize the gradient, we replaced $\norm{\grad u}^2$ with $\max\{\dx^2, \norm{\grad u}^2\}$. 

We computed the solution with boundary conditions corresponding to the exact Aronsson solution~\cite{AronssonIL2}.   
The finite difference scheme presented above fails to converge, see Figure~\ref{figBadSoln}.  In this case, the solution has the form $\abs{x} - \abs{y}$ in the centre.    In fact, it can be shown using symmetry considerations that $\abs{x} - \abs{y}$ is an exact solution of the symmetric finite difference scheme. On the flat parts, the operator is zero, so we only need to check the corners.  In fact, $u(x,y) = \abs{x}$ and $u(x,y) = \abs{y}$ are also exact solutions.  While other discretizations are possible which break this symmetry, we tried several other simple consistent finite difference schemes and were always able to find examples where they failed to converge.

\begin{figure}[htbp]
\begin{center}
\includegraphics[width=.49\textwidth]{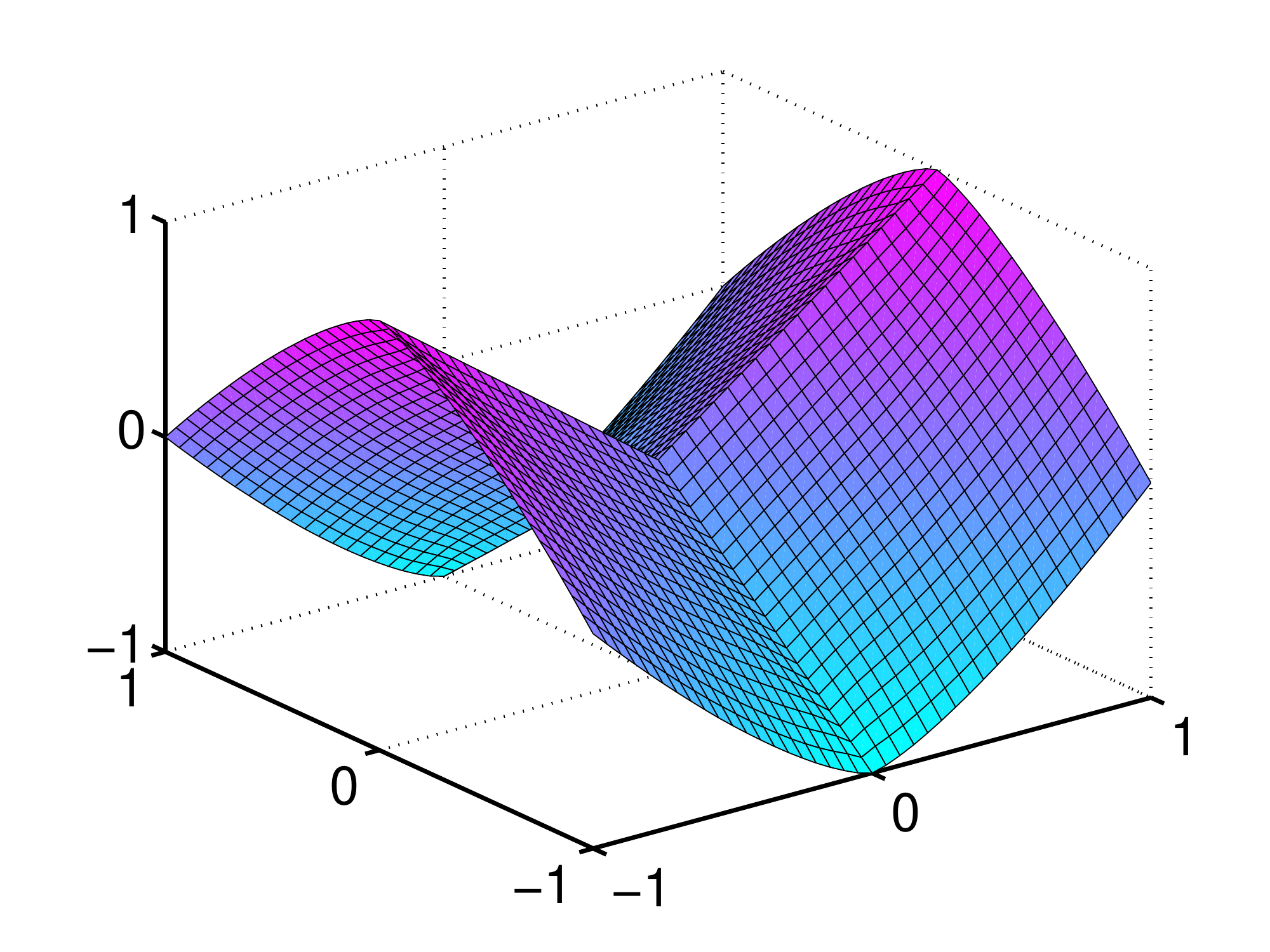}
\includegraphics[width=.49\textwidth]{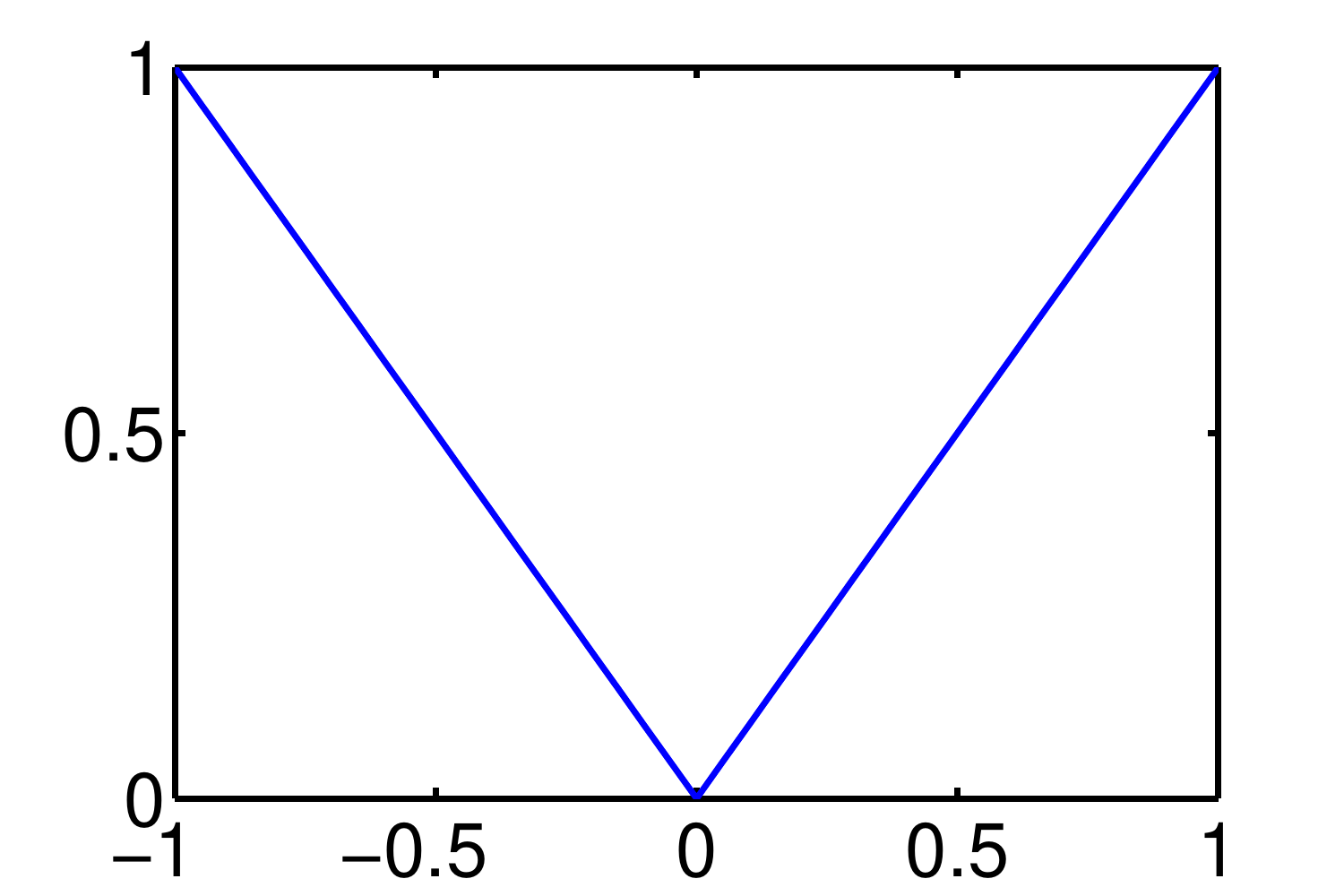}
\caption{
Standard finite difference solutions of the $\infty$-Laplacian.
Boundary data is a  $\abs{x}^{4/3}-\abs{y}^{4/3}$, the computed solution is incorrect, with a singularity of the form  $\abs{x} - \abs{y}$ at the origin.  
(a) Surface Plot, (b) plot of $u(x,0)$.
}
\label{figBadSoln}
\end{center}
\end{figure}

\subsection{Variational $p$-Laplacian}
To clarify any confusion, we also discuss the variational $p$-Laplacian.  This operator arises in the variational problem
\[
J[u] = \int_\Omega  
\abs{\grad u(x)}^p\, dx, \quad u = h \text{ on } \partial \Omega
\]
for $1 < p < \infty$.  The \emph{Euler-Lagrange equation} for the minimizer is
\[
\dv \left (\abs{\grad u(x)}^{p-2} \grad u \right) = 0.
\]
The game theoretical $p$-Laplacian,~\eqref{pLapId}, is related to the variational version by a normalization which makes it homogeneous of order zero in the norm of the gradient.
\bq\label{pLap}
\pLap  u = \frac{1}{p \abs{\grad u }^{p-2}} \dv \left (\abs{\grad u(x)}^{p-2} \grad u \right) 
\eq
To check consistency with definition~\eqref{pLapId}, expand the two terms in the operator above to give
$
\pLap  u = \frac 1 p \Lap u + \frac{(p-2)}{p} \IL u
$
which is the representation~\eqref{plap2}.

When the right hand side function, $\grhs$, is zero, the solutions of both versions of the $p$-Laplacian equations coincide.



\section{Viscosity solutions}\label{sec:viscositysolutions}
In this section we recall the definition of viscosity solutions for the infinity Laplacian and the definition of consistency used in the convergence theory.
\begin{definition}[Viscosity solutions]
(1) A continuous function $u$ defined on the set $U$, is a viscosity \emph{subsolution} of $-\IL u = 0$ in $U$, if for every local maximum point $x\in U$ of $u-\phi$, where $\phi$ is $C^2$ in some neighborhood of $x$, we have
\[
\begin{cases}
-\IL \phi(x) \le 0,               & \text{ if } D\phi(x) \ne 0,\\
-\eta^T D^2 \phi(x) \eta \le 0   &\text{ for some } |\eta| \leq 1, \text{ if } D\phi(x) = 0. \end{cases}
\]
(2) A continuous function $u$ defined on the set $U$, is a viscosity \emph{supersolution} of $-\IL u = 0$ in $U$, if for every local maximum point $x\in U$ of $u-\phi$, where $\phi$ is $C^2$ in some neighborhood of $x$, we have
\[
\begin{cases}
-\IL \phi(x) \ge 0,               & \text{ if } D\phi(x) \ne0,\\
-\eta^T D^2 \phi(x) \eta \ge 0   &\text{ for some } |\eta| \leq 1, \text{ if } D\phi(x) = 0. \end{cases}
\]
(3) Moreover, a continuous function defined on the set $U$, is a viscosity \emph{solution} of $-\IL u = 0$ in $U$, if it is both a viscosity subsolution and a viscosity supersolution in $U$.
\end{definition}

Consistency requires only that we can apply the test function definition in the limit.  

\newcommand{\Sk}{\IL}
\begin{definition}[Consistency]
The numerical scheme $\Sk^{dx,d\theta}$ is \emph{consistent} if for every $\phi \in C^2(U)$, and for every $x\in U$,
\[
\lim_{dx,d\theta\to 0} \Sk^{dx,d\theta}(\phi)(x) = -\IL \phi(x)
\]
if $D\phi(x) \ne0$, and
\bq\label{conspzero}
\lambda \leq \liminf_{dx,d\theta\to 0} \Sk^{dx,d\theta}(\phi)(x) \leq \limsup_{dx,d\theta\to 0} \Sk^{dx,d\theta}(\phi)(x) \leq \Lambda
\eq
where $\lambda, \Lambda$ are the least and greatest eigenvalues of $D^2\phi(x)$, otherwise.
\end{definition}
By a theorem of Barles-Souganidis \cite{BSNum}, consistent, monotone schemes converge to the viscosity solution of the PDE, provided this solution is unique.

\section{Discretization}
In this section we present the discretization of the Infinity Laplace operator, which is needed for convergence to the viscosity solution.  The discretization we present here is different from the one in~\cite{ObermanIL}. The previous scheme was given by solving the a discrete version of the Lipschitz extension problem.  This scheme is simpler, since the resulting equation is explicit.  In addition, this scheme gives the correct scaling in $\dx$ which is needed for a non-zero right hand side.

 By now it is well known that,  for smooth functions with non-vanishing gradient,
the operator is approximated by the average below.  This result follows from Taylor expansions, and the fact that the minimum (or maximum) is in the direction of the gradient, at least to $O(\dx)$.  We show below that the accuracy is actually $\bO(\dx^2)$, which is an improvement over previous results.

\begin{lemma}\label{lem:consistency}
Let $u(x)$ be a smooth function with non-vanishing gradient at $x$.  Then 
\bq\label{ILallBall}
\Delta_\infty u(x) =
\min_{\abs{y-x} = \e} 
\frac{u(y) - u(x) }{\e^2}
+ 
\max_{\abs {y-x} = \e} 
\frac{u(y) - u(x) }{\e^2}
+ O(\e^2).
\eq
\end{lemma}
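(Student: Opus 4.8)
The plan is to prove the identity by a direct Taylor expansion of $u$ around $x$, tracking terms carefully enough to see that the first-order contributions cancel between the $\min$ and $\max$, the second-order contributions reproduce $\IL u(x)$, and the remaining error is genuinely $O(\e^2)$ rather than $O(\e)$. Write $y = x + \e v$ with $\abs{v} = 1$, and expand
\bq\label{taylorexp}
u(x+\e v) - u(x) = \e\, \grad u(x)\cdot v + \frac{\e^2}{2}\, v^\tra D^2 u(x)\, v + \frac{\e^3}{6}\, D^3 u(x)[v,v,v] + O(\e^4),
\eq
so that $\frac{u(y)-u(x)}{\e^2} = \frac{1}{\e}\grad u(x)\cdot v + \frac12 v^\tra D^2 u(x) v + O(\e)$ uniformly in $v$. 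The $1/\e$ term dominates, so the maximizing direction $v_+$ is close to $\grad u(x)/\abs{\grad u(x)}$ and the minimizing direction $v_-$ is close to its negative; the main point is to quantify \emph{how} close.

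First I would locate the optimal directions. Let $n = \grad u(x)/\abs{\grad u(x)}$. A perturbation argument shows $v_\pm = \pm n + O(\e)$: since $\grad u(x)\cdot v$ is maximized at $v = n$ with a quadratic (nondegenerate, because the gradient does not vanish) fall-off on the sphere, and the $O(\e)$ correction term $\frac{\e}{2} v^\tra D^2 u(x) v$ has bounded gradient along the sphere, the critical point moves by $O(\e)$. Writing $v_+ = n + \e w_+ + O(\e^2)$ with $w_+ \perp n$ (to stay on the sphere to leading order), substitute into \eqref{taylorexp}. The key cancellation is that $\grad u(x)\cdot v_+ = \abs{\grad u(x)}(n\cdot n + \e\, n\cdot w_+ + \dots) = \abs{\grad u(x)}(1 + O(\e^2))$, because $n \cdot w_+ = 0$; hence the potentially-dangerous $\frac{1}{\e}\grad u(x)\cdot v_+$ term equals $\frac{1}{\e}\abs{\grad u(x)} + O(\e)$ with the $O(1)$ correction that would come from a generic $O(\e)$ displacement being absent. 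Likewise $v_+^\tra D^2 u(x) v_+ = n^\tra D^2 u(x) n + O(\e)$. So
\bq\label{maxterm}
\max_{\abs{y-x}=\e}\frac{u(y)-u(x)}{\e^2} = \frac{\abs{\grad u(x)}}{\e} + \frac12 n^\tra D^2 u(x)\, n + O(\e),
\eq
and symmetrically the $\min$ term, attained near $v_- = -n + O(\e)$, equals $-\frac{\abs{\grad u(x)}}{\e} + \frac12 n^\tra D^2 u(x)\, n + O(\e)$. Adding the two, the singular $1/\e$ terms cancel exactly and we obtain $n^\tra D^2 u(x) n + O(\e) = \IL u(x) + O(\e)$, using that $\IL u = n^\tra D^2 u\, n$ by definition \eqref{IL}.

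To upgrade the error from $O(\e)$ to $O(\e^2)$, I would redo the bookkeeping by one more order: carry the cubic term in \eqref{taylorexp} and the $O(\e^2)$ term in the expansion of $v_\pm$. The odd symmetry is what does the work. Under $v \mapsto -v$ the linear and cubic terms of \eqref{taylorexp} change sign while the quadratic term is even; combined with the fact that $v_- = -v_+ + O(\e^2)$ (the optimal directions are nearly antipodal, since replacing $v$ by $-v$ swaps the roles of $\min$ and $\max$ up to the even quadratic perturbation), the sum $\frac{u(x+\e v_+)-u(x)}{\e^2} + \frac{u(x+\e v_-)-u(x)}{\e^2}$ has all odd-order-in-$\e$ contributions cancel in pairs, leaving only even orders: $n^\tra D^2 u(x) n + O(\e^2)$. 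I expect the main obstacle to be precisely this last step — making the claim ``$v_- = -v_+ + O(\e^2)$'' and ``odd terms cancel'' rigorous requires showing that the first-order displacements satisfy $w_- = -w_+$, which follows from the antipodal symmetry of the optimization problem (the objective at $v$ for $\max$ equals minus the objective at $-v$ for $\min$, up to the even quadratic piece whose sign does not flip), together with the nondegeneracy of the Hessian of the constraint-restricted objective that guarantees the implicit-function-theorem expansion of $v_\pm$ is valid and smooth in $\e$. Once that symmetry is pinned down, the $O(\e^2)$ bound is immediate.
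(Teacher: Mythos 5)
Your setup --- Taylor expansion, locating the extremal directions by perturbing off $\pm n$ with $n = \grad u(x)/\abs{\grad u(x)}$, and the observation that $n\cdot w_\pm = 0$ kills the would-be $O(1)$ error --- is the same route the paper takes, and everything through the $O(\e)$ accuracy statement is fine. The gap is in the last step: the claim $v_- = -v_+ + O(\e^2)$ is false, and with it the ``odd-order terms cancel in pairs'' mechanism. If you compute the first-order corrections explicitly (as the paper does), writing $p=\grad u(x)$, $Q = D^2u(x)$, you find $v_\pm = \pm\hat p + \e c\,\hat p^\perp + O(\e^2)$ with the \emph{same} scalar $c$ (proportional to $(\hat p^\perp)^\tra Q\hat p/\abs{p}$) for both: the corrections are equal, not opposite, so $v_+ + v_- = 2\e c\,\hat p^\perp$, which is $O(\e)$ but not $O(\e^2)$. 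This happens precisely because the perturbing term $\tfrac{\e}{2}v^\tra Qv$ is \emph{even} under $v\mapsto -v$; your own parenthetical (``up to the even quadratic piece whose sign does not flip'') is exactly where the antipodal symmetry fails at first order. A concrete instance: for $u = x_1 + x_1x_2$ at the origin, $v_+ = (1,\e)+O(\e^2)$ while $v_- = (-1,\e)+O(\e^2)$. Nor can you swap $v_-$ for $-v_+$ harmlessly: the restricted objective has second derivative of size $\abs{p}/\e$ along the sphere, so moving the evaluation point by $O(\e)$ away from the critical point changes the value by $O(\e^{-1}\cdot\e^2) = O(\e)$ --- exactly the order you are trying to control.

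The conclusion is still true, but the cancellation works differently. Substituting $v_\pm = \pm\hat p\,(1-\e^2c^2/2) + \e c\,\hat p^\perp$ into $\frac{p\cdot v}{\e} + \frac12 v^\tra Qv$ gives $\pm\frac{\abs{p}}{\e} + \frac12\hat p^\tra Q\hat p \pm \e\bigl(c\,\hat p^\tra Q\hat p^\perp - \tfrac12\abs{p}c^2\bigr) + O(\e^2)$: the order-$\e$ parts of the max and the min are exact negatives of one another, because both the linear functional and the cross term $\hat p^\tra Q\hat p^\perp$ flip sign with the leading direction $\pm\hat p$ while the correction $\e c\,\hat p^\perp$ stays fixed. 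Summing kills them and leaves $\hat p^\tra Q\hat p + O(\e^2) = \IL u(x) + O(\e^2)$; the paper's proof identifies the $O(\e^2)$ term explicitly as $\e^2c^2\,(\hat p^\perp)^\tra Q\hat p^\perp = \e^2 c^2\,\MC u$. So you should replace the symmetry argument by this explicit first-order computation (or an equivalent one); as written, the step that upgrades $O(\e)$ to $O(\e^2)$ does not go through.
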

\begin{proof}
We prove the result in two dimensions, which is all that is needed here.  A longer proof is possible which in higher dimensions, which requires a Lagrange multiplier, $\lambda$ for the constraint, and an asymptotic expansion in $\lambda$ as well.

It is enough to consider $u(x) = p^\tra x + \frac 1 2  x^\tra Qx$, for nonzero $p = \grad u(x)$, and symmetric matrix $Q = D^2u(x)$.  Use the notation
$\hat{p} = \frac{p}{\abs{p}}$, $p^\perp = (-p_2, p_1),$ for $p = (p_1, p_2)$.
With this notation, the operator,~\eqref{IL} is given by
\begin{align}
\label{ILquadratic}
\IL u  &= (\hat{p})^\tra Q \hat{p}
\end{align}

Write 
\[
F(\an) = {u(\e x(\an))} =  \e p^\tra x(\an) + \frac {\e^2} 2  x(\an)^\tra Qx(\an), \qquad x(\theta) = (\cos(\an),\sin(\an))
\]
Then a critical point of $F$ is given by 
\[
0 = F'(\an) = ( p + \e Qx(\an))^\tra x'(\an) 
= 
( p + \e Qx)^\tra x^\perp,
\]
where $x'(\an) = x^\perp(\an) = (-\sin(\an), \cos(\an))$.
Perform an asymptotic expansion the condition in $\e$, with
$x = x_0 + \e x_1$,
 to obtain
\[
(p + \e Qx_0)^\tra (x_0 + \e x_1)^\perp +   \bO(\e^2) = 0.
\]
Then the $\bO(1)$ terms give
\[
p^\tra x_0^\perp = 0
\]
which yields
\[
x_0 = \pm \hat p, 
\]
and the $\bO(\e)$ terms give
\[
p^\tra x_1^\perp + (x_0^\perp Q)^\tra x_0 = 0
\]
which yields
\[
x_1 = - \frac{ (\hat p^\perp)^\tra Q \hat p}{\abs{p}} \hat p^\perp
\]
(Note that we are violating the constraint that $x$ be a unit vector, but the constraint is still satisfied to $\bO(\e^2)$).
Write 
\[
x^+ = \arg \max_\an F(\an),
\qquad
x^- = \arg \min_\an F(\an)
\]
Then we have 
\[
x^\pm = \pm \hat p + \e c {\hat p^\perp}, \quad c = - \frac{ (\hat p^\perp)^\tra Q \hat p}{\abs{p}}. 
\]
Inserting the values for $x^\pm$, into the expression on the right hand side of~\eqref{ILallBall} gives
\begin{align*}
\min_{\abs{y-x} = \e} 
\frac{u(y) - u(x) }{\e^2}
+ 
\max_{\abs {y-x} = \e} 
\frac{u(y) - u(x) }{\e^2}
&= \frac{ F( \e x^-) + F( \e x^+)}{\e^2}
\\
&= \hat p^\tra Q \hat p +  \e^2  c^2  \left((\hat p^\perp)^\tra Q  \hat p^\perp \right)
\\
&=  \IL u +  \e^2  \left ( \frac{ (\hat p^\perp)^\tra Q \hat p}{\abs{p}}  \right )^2 \MC u
\\
&= \IL u + \e^2 c^2 \MC u
\end{align*}
which gives the desired result.
\end{proof}

However, for any given grid, it is impossible to sample the values on the entire circle.  Instead, only values in a discrete set of directions are available.  While it is certainly possible to interpolate the values onto the ball, quadratic interpolation is not monotone, so it violates the maximum principle, which is needed for the convergence proof.  As we show in an example below, non-monotone schemes do not converge for this equation.

So an additional discretization parameter is needed, which we present in what follows. 
\begin{definition}[Spatial and directional resolution]
Given a stencil of neighbouring grid points $v_1,\dots,v_n$ on a Cartesian grid, 
define the local \emph{spatial resolution}, $\dx$,  to be the maximum length of the neighbours
\bq\label{dxdefn}
\dx = \max_{i=1}^n |v_i|,
\eq
and the local \emph{directional resolution}, $d\theta$, to be the maximum directional distance to a neighbour
\bq\label{dthetadefn}
d\theta = \max_{\abs{v} = 1} \min_{i=1}^n \left | v - \frac{v_i}{\abs{v_i}} \right |
\eq
\end{definition}
The direction vectors used will be on a grid, arranged as in~\autoref{fig:schemes}.  In practice, we obtain acceptable accuracy using a relatively narrow stencil. 

\begin{figure}
\centering
 \scalebox{.8}{\includegraphics{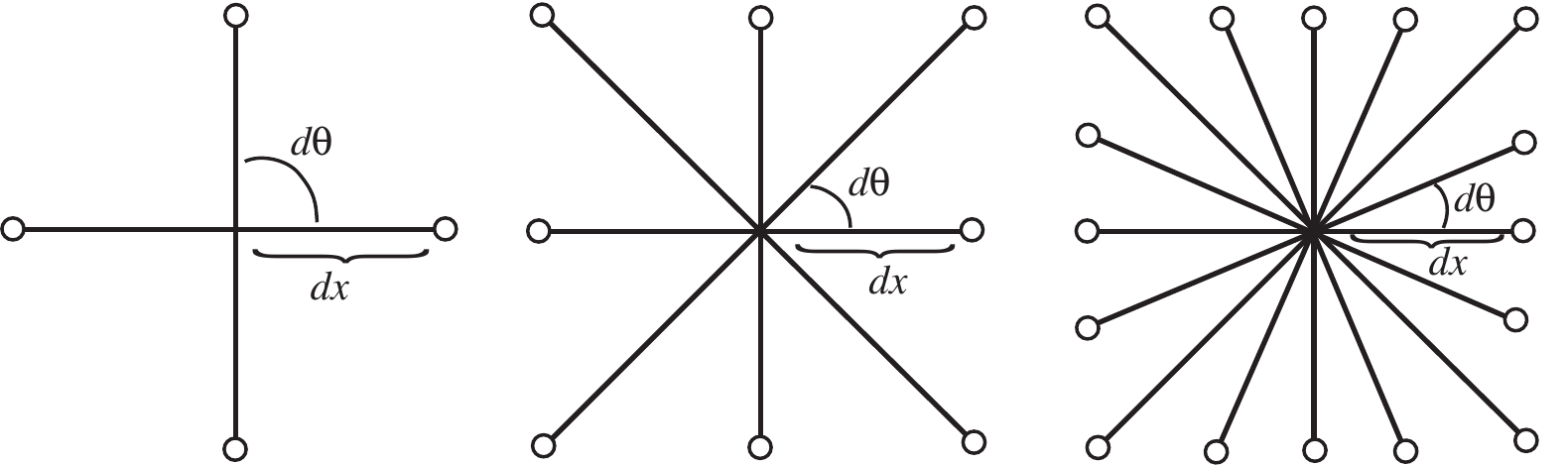} }
\caption{Stencils for the 5, 9, and 17 point schemes} \label{fig:schemes}
\end{figure}

\begin{definition}[Scheme definition]
Define the discretization of $\IL$ to be given by 
\bq\label{ILdxdth}
\IL^{\dx,d\theta}  u(x)  = \max_{i}  \frac{u(x+v_i) - u(x)}{\abs{v_i}^2} + \min_{i}  \frac{u(x+v_i) - u(x)}{\abs{v_i}^2} 
\eq
where $\{v_i\}$ are the neighbours of the point $x$, as in~\autoref{fig:schemes}.
\end{definition}

Next we prove consistency, when the parameters $dx,d\theta$ go to $0$.
For the grid points, we will assume
\begin{gather}
\label{SymmetricDirections}
\text{whenever $v_i$ is a neighbouring grid point, $-v_i$ is as well},\\
\label{dxprime}
\frac{ \max_{i} \abs{v_i}  } {\min_{i} \abs{v_i}} = o(1)
\end{gather}

\begin{theorem}\label{thm:consistency}
Let $u$ be a smooth function in a neighbourhood of $x$, then 
\bq\label{ILconsistency}
\IL^{\dx,d\theta}  u(x) = \IL u(x) + \bO(d\theta + \dx^2)
\eq
\end{theorem}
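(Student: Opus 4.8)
The plan is to reduce the discrete statement~\eqref{ILconsistency} to the continuous averaging identity of Lemma~\ref{lem:consistency}, and then control the error incurred by replacing the full circle of radius $\e$ with the finitely many stencil directions $\{v_i/|v_i|\}$. First I would separate two cases according to whether $\grad u(x)$ vanishes. When $\grad u(x) \ne 0$, fix $\e = \dx$ and write the target quantity $\IL^{\dx,d\theta}u(x)$ in terms of second-order Taylor expansions of $u$ along each neighbour: for each $i$, $\frac{u(x+v_i)-u(x)}{|v_i|^2} = \frac{1}{|v_i|}\,\grad u(x)\cdot\hat v_i + \tfrac12\,\hat v_i^\tra D^2u(x)\,\hat v_i + \bO(|v_i|)$ where $\hat v_i = v_i/|v_i|$. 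The key structural point is that, by~\eqref{SymmetricDirections}, the set $\{\hat v_i\}$ is symmetric, so maximizing (resp. minimizing) $\frac{u(x+v_i)-u(x)}{|v_i|^2}$ over $i$ picks, up to the $\bO(\dx)$ error and up to the normalization spread controlled by~\eqref{dxprime}, the direction $\hat v_i$ closest to $+\hat p$ (resp. $-\hat p$), just as the continuous maximizer/minimizer in Lemma~\ref{lem:consistency} sits within $\bO(\dx)$ of $\pm\hat p$.

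Next I would quantify "closest direction": by the definition~\eqref{dthetadefn} of $d\theta$, there is a stencil direction $\hat v_+$ with $|\hat v_+ - \hat p| \le d\theta$, and likewise $\hat v_-$ with $|\hat v_- + \hat p| \le d\theta$ (using symmetry). Plugging $\hat v_\pm$ into the Taylor expansion and adding, the first-order terms $\grad u(x)\cdot(\hat v_+ + \hat v_-)$ nearly cancel: $\hat v_+ + \hat v_- = (\hat v_+ - \hat p) + (\hat v_- + \hat p)$ has norm $\bO(d\theta)$, but it is multiplied by $\frac{1}{|v_i|}|\grad u(x)| = \bO(1/\dx)$, so this contributes $\bO(d\theta/\dx)$ — which is \emph{not} $\bO(d\theta + \dx^2)$. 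To fix this I would instead compare directly with the continuous quantity: the continuous max over $|y-x|=\e$ is attained at $x^+$ with $|x^+ - \hat p| = \bO(\dx)$ and is $\ge$ the value along $\hat v_+$; conversely the discrete max is $\le$ the continuous max since the discrete directions are a subset of the circle. So $\max_i \le \max_{|y-x|=\e}$, and for the lower bound I need $\max_i \ge \max_{|y-x|=\e} - (\text{error})$. Here the error is estimated not through the linear term but through smoothness of the function $\theta \mapsto \frac{u(x+\e x(\theta))-u(x)}{\e^2}$ near its maximizer: since this function is $C^2$ with a critical point at $\theta^+$, its value changes by only $\bO(|\Delta\theta|^2)$ when $\theta$ moves by $\Delta\theta$, and $\hat v_+$ is within $d\theta$ of $\theta^+$ up to $\bO(\dx)$; hence $\max_i \ge \max_{|y-x|=\e} - \bO(d\theta^2 + \dx^2)$, and the same for the min. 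Combining with Lemma~\ref{lem:consistency}'s $\bO(\e^2) = \bO(\dx^2)$ gives $\IL^{\dx,d\theta}u(x) = \IL u(x) + \bO(d\theta^2 + \dx^2)$, which is stronger than, hence implies,~\eqref{ILconsistency}. For the degenerate case $\grad u(x) = 0$, the function $\theta \mapsto \frac{u(x+\e x(\theta))-u(x)}{\e^2}$ equals $\tfrac12 \hat x(\theta)^\tra D^2u(x)\hat x(\theta) + \bO(\dx)$, whose max and min over the full circle are $\tfrac12\Lambda$ and $\tfrac12\lambda$ up to $\bO(\dx)$; the discrete max/min lie between, so $\IL^{\dx,d\theta}u(x) \in [\lambda,\Lambda] + \bO(\dx)$, consistent with~\eqref{conspzero}.

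The main obstacle I anticipate is precisely the near-cancellation in the first-order term: a naive "plug in the nearest direction" argument loses a factor $1/\dx$ and produces an error $\bO(d\theta/\dx)$ rather than the claimed $\bO(d\theta + \dx^2)$. The resolution — comparing the discrete extremum to the continuous extremum from Lemma~\ref{lem:consistency} and exploiting that the extremum is a critical point of a $C^2$ profile (so perturbing the direction costs only \emph{quadratically} in the angular perturbation) — is the crux of the proof. A secondary technical nuisance is bookkeeping the hypothesis~\eqref{dxprime}: the denominators $|v_i|^2$ are not all equal, so the "argmax over $i$" is not literally the same as "argmax of angular profile"; one must check that the ratio $\max|v_i|/\min|v_i| = 1 + o(1)$ only perturbs the selected direction by $o(1)$ and the selected value by $o(\dx^0)$ times the leading term, i.e. by a lower-order amount absorbed into the stated error. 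Everything else is routine Taylor expansion.
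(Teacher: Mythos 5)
There is a genuine gap at the crux you yourself identify. You correctly observe that the naive ``nearest direction'' argument loses a factor of $1/\dx$ on the first-order term, but your proposed repair does not close that hole. The angular profile $G(\theta) = \e^{-2}\bigl(u(x+\e x(\theta)) - u(x)\bigr) = \e^{-1}p^\tra x(\theta) + \tfrac12 x(\theta)^\tra Q x(\theta)$ is indeed critical at its maximizer, but its second derivative there is $G''(\theta^+) = -\e^{-1}\,p^\tra x(\theta^+) + \bO(1) = -|p|/\e + \bO(1)$, i.e. of size $\bO(1/\dx)$ --- the same $1/\dx$ that causes the trouble in the linear term reappears in the curvature of the profile. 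Perturbing the direction by $d\theta$ therefore costs $\bO(d\theta^2/\dx)$, not $\bO(d\theta^2)$, and your conclusion $\IL^{\dx,d\theta}u = \IL u + \bO(d\theta^2+\dx^2)$ is unjustified. The bound you actually obtain, $\bO(d\theta^2/\dx + \dx^2)$, does not imply the claimed $\bO(d\theta + \dx^2)$ without an extra assumption such as $d\theta = \bO(\dx)$; for instance with $d\theta \sim \dx^{1/2}$ your error estimate is $\bO(1)$ and gives no consistency at all. (A smaller issue, which you do flag, is that the stencil points do not lie on a single circle $|y-x|=\e$, so ``discrete max $\le$ continuous max'' is not literal.)

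The missing idea is that the cancellation of the dangerous $\bO(1/\dx)$ terms must be \emph{exact}, not approximate, and this is precisely what the symmetry hypothesis~\eqref{SymmetricDirections} delivers: since $-v_i$ is a stencil vector whenever $v_i$ is, the minimizing direction is (asymptotically) the exact antipode of the maximizing one, $v^- = -v^+$, with the same length. Hence the sum of the linear contributions $\bigl(\tfrac{v^+}{|v^+|^2} + \tfrac{v^-}{|v^-|^2}\bigr)\cdot p$ vanishes identically rather than to within $\bO(d\theta)$, and what remains is the pair of \emph{bounded} quadratic terms $\tfrac12\,\widehat{v^\pm}{}^\tra Q\,\widehat{v^\pm}$, each within $\bO(d\theta)$ of $\tfrac12\,\hat p^\tra Q\hat p$ because $v\mapsto v^\tra Qv$ is Lipschitz on the unit circle. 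That is the mechanism the paper's proof uses, and it requires no relation between $d\theta$ and $\dx$. Your treatment of the degenerate case $\grad u(x)=0$ via~\eqref{conspzero} is fine, but in the nondegenerate case you need to replace the critical-point estimate by the exact antipodal cancellation (or else add and justify a hypothesis tying $d\theta$ to $\dx$).
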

\begin{proof}
It is enough to consider 
\[
u(x) = p^\tra x + \frac 1 2  x^\tra Qx,
\]
for $p = \grad u(x)$, and symmetric matrix $Q = D^2u(x)$.  Use the notation $\hat{p} = \frac{p}{\abs{p}}$.

First consider the case $Du \not = 0$.
Define 
\[
v^+ = \arg \max_{i}  \frac{u(x+v_i) - u(x)}{\abs{v_i}^2}
\]
\[
v^- = \arg \min_{i}  \frac{u(x+v_i) - u(x)}{\abs{v_i}^2}
\]
Using a Taylor expansion for $u$, we have
\[
\frac{u(x+v_i) - u(x)}{\abs{v_i}^2} = \frac{v_i p + C |v_i|^2}{\abs{v_i}^2}
\]
so, as $\dx \to 0$, the maximum occurs at $v^+ = \arg\max_i \hat{v}_i p$ which is the closest direction vector to $\hat p$, so 
\bq\label{vp}
\widehat {v^+} = \hat p + O(d\theta)
\eq
by~\eqref{dthetadefn} and similarly
\bq\label{vn}
\widehat{ v^-} =\hat p + O(d\theta),
\eq
In addition, since according to~\eqref{SymmetricDirections} we assume that the grid points are arranged symmetrically, we have 
\bq\label{vpvn}
\widehat {v^+} = -\widehat {v^-} 
\eq
as $\dx, d\theta \to 0$.

Then using the Taylor expansion for $u$, insert~(\ref{vp},\ref{vn}) into \eqref{ILdxdth} to obtain
\begin{align*}
\IL^{\dx,d\theta}  u(x) 
&= \left(\frac{v^+}{\abs{v^+}^2} + \frac{v^-}{\abs{v^-}^2} \right) p + \frac 1 2 \frac{v^+ Q v^+}{\abs{v^+}^2} + \frac{v^- Q v^-}{\abs{v^-}^2} + \bO(\dx) &
\\ &= \frac 1 2 \frac{v^+ Q v^+}{\abs{v^+}^2} + \frac 1 2 \frac{v^- Q v^-}{\abs{v^-}^2}+ \bO(\dx) & \text{ by \eqref{vpvn}}
\\ &= \hat p Q \hat p + \bO(\dx+ d\theta) & \text{ by (\ref{vp},\ref{vn})}
\end{align*}
which is consistent, and accurate to $\bO(\dx+ d\theta).$

In the case $Du(x) = 0$, we are only required only to verify \eqref{conspzero}.  In this case we can assume $u(x) = \frac 1 2  x^\tra Qx$, and compute
\[
\max_{i}  \frac{u(x+v_i) - u(x)}{\abs{v_i}^2} = \frac 1 2 \max_i \widehat{ v_i}^\tra Q  \widehat{ v_i} = \frac 1 2 \Lambda + \bO(d\theta + \dx^2)
\]
and similarly
\[
\min_{i}  \frac{u(x+v_i) - u(x)}{\abs{v_i}^2} = \frac 1 2 \min_i \widehat{ v_i}^\tra Q  \widehat{ v_i} = \frac 1 2 \lambda +  \bO(d\theta + \dx^2)
\]
where $\lambda, \Lambda$ are the smallest and largest eigenvalues of $Q$, respectively.  So then
\[
\IL^{\dx,d\theta}  u(x) = \frac 1 2 (\lambda + \Lambda) +  \bO(d\theta + \dx^2)
\]
which is consistent, according to~\eqref{conspzero}.
\end{proof}

\subsection{The discretization of the $p$-Laplacian}
We wish to build a consistent, monotone scheme for 
\[
\pLap = \wa \Lap + \wb \IL.
\]
Starting with the scheme for $\IL$, we can simply combine this with the standard finite differences for the Laplacian,
\begin{gather}\label{LapFD}
\Lap^\dx u(\cdot )  =  4\frac{\bar u - u(\cdot) }{\dx^2}, 
\\ \nonumber
\bar u(x,y) = \frac 1 4  \left( u(x+h,y) + u(x-h,y) + u(x,y+h) + u(x,y-h)\right)
\end{gather}

  Then, using the characterization of monotone schemes from~\cite{ObermanSINUM}, the combined scheme is still monotone.

\begin{theorem}[Convergence]  The solution of the difference scheme for the $p$-Laplacian,~\eqref{plap2},  which is given by a convex combination of the schemes for~\eqref{IL}, ~\eqref{ILdxdth} and the standard finite difference scheme for the Laplacian,~\eqref{LapFD}
\[
\pLap^{\dx,d\theta}
= \wa \Lap^\dx + \wb  \IL^{\dx,d\theta}
\]
converges (uniformly on compact sets) as $\dx,d\theta \to 0$ to the solution of \eqref{pLap}.
\end{theorem}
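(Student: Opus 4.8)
The plan is to invoke the Barles--Souganidis theorem \cite{BSNum}, as recalled at the end of Section~\ref{sec:viscositysolutions}: a \emph{monotone}, \emph{consistent}, and \emph{stable} scheme converges locally uniformly to the unique viscosity solution of the Dirichlet problem \eqref{PDE}--\eqref{D}, provided that solution is unique. So the proof reduces to (i) checking these three properties for the scheme $\pLap^{\dx,d\theta}u = \wa\Lap^\dx u + \wb\IL^{\dx,d\theta}u$ in $\Omega$ together with $u=h$ on $\partial\Omega$, and (ii) citing uniqueness for \eqref{pLap}. Since we restrict to $p\in[2,\infty]$, the weights satisfy $\wa = 1/p \ge 0$ and $\wb = (p-2)/p \ge 0$, so $\pLap^{\dx,d\theta}$ is a nonnegative combination of the two component schemes.

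\textbf{Consistency.} For $\phi\in C^2$ near $x$ with $\grad\phi(x)\ne0$, Theorem~\ref{thm:consistency} gives $\IL^{\dx,d\theta}\phi(x) = \IL\phi(x) + \bO(d\theta+\dx^2)$, while the five-point stencil \eqref{LapFD} gives $\Lap^\dx\phi(x) = \Lap\phi(x) + \bO(\dx^2)$; summing with the fixed weights and using \eqref{plap2} yields $\pLap^{\dx,d\theta}\phi(x)\to\pLap\phi(x)$. At a point with $\grad\phi(x)=0$, only the analogue of \eqref{conspzero} is needed: the proof of Theorem~\ref{thm:consistency} shows $\IL^{\dx,d\theta}\phi(x)\to\tfrac12(\lambda+\Lambda)$ while $\Lap^\dx\phi(x)\to\trace(D^2\phi(x))=\lambda+\Lambda$ (in two dimensions), so $\pLap^{\dx,d\theta}\phi(x)\to(\wa+\tfrac12\wb)(\lambda+\Lambda)$; since $\tfrac12(\lambda+\Lambda)\in[\lambda,\Lambda]$, this limit lies in the interval $[\wa(\lambda+\Lambda)+\wb\lambda,\ \wa(\lambda+\Lambda)+\wb\Lambda]$ of values admissible for the viscosity test of \eqref{pLap} at a critical point, which is the required bracketing.

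\textbf{Monotonicity and stability.} Both building blocks are degenerate elliptic in the sense of \cite{ObermanSINUM}: $\Lap^\dx$ is the classical monotone Laplacian, and in \eqref{ILdxdth} each quotient $(u(x+v_i)-u(x))/|v_i|^2$ is nondecreasing in $u(x+v_i)$ and nonincreasing in $u(x)$, a property preserved under $\max$, $\min$, their sum, and the nonnegative combination with $\Lap^\dx$ --- this is exactly the characterization already cited from \cite{ObermanSINUM}; near $\partial\Omega$ the stencil is truncated to neighbours lying in $\bar\Omega$, which retains monotonicity and consistency. Monotonicity then yields a discrete comparison principle, and a uniform bound $\|u^{\dx,d\theta}\|_\infty\le C$ follows by comparison with discrete sub- and supersolutions built from $\sup|h|$ and a bound on $\grhs$ (for instance radial quadratic barriers: $\Lap^\dx$ is exact on quadratics and $\IL^{\dx,d\theta}$ exact up to the $\bO(d\theta+\dx^2)$ term, so a slightly inflated barrier is a genuine discrete supersolution for $\dx,d\theta$ small). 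Existence of the discrete solution $u^{\dx,d\theta}$ itself follows from a discrete Perron argument, or by iterating the explicit fixed-point method of \cite{ObermanSINUM} to convergence.

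\textbf{Conclusion and main obstacle.} The Dirichlet problem for \eqref{pLap} has a unique viscosity solution: for $p=\infty$ this is Jensen's theorem \cite{JensenUniq} (see also \cite{ArmstrongSmartUniqueness}), and for $2\le p<\infty$ uniqueness comes from the game-theoretic/PDE theory \cite{PeresSheffield,ManfrediPLap}. With monotonicity, consistency, stability, and uniqueness established, \cite{BSNum} gives locally uniform convergence of $u^{\dx,d\theta}$ to that solution. The step requiring genuine care is the consistency at critical points of the test function, where the test condition for \eqref{pLap} is a \emph{bracketing} by the extreme eigenvalues of $D^2\phi$ rather than an equality, so one must verify that the single limit value of $\pLap^{\dx,d\theta}\phi(x)$ falls inside the admissible range; constructing the discrete barriers near corners of $\Omega$ for the stability estimate is the other technical --- but routine --- point.
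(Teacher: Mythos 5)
Your proof is correct and follows essentially the same route as the paper: reduce to the Barles--Souganidis framework, get consistency of $\IL^{\dx,d\theta}$ from Theorem~\ref{thm:consistency} and of the combination by linearity, get monotonicity from the elliptic-scheme characterization of \cite{ObermanSINUM}, and cite uniqueness for \eqref{pLap}. You in fact supply more detail than the paper does on the points it leaves implicit --- stability via discrete barriers, existence of the discrete solution, and the bracketing check for the combined scheme at critical points of the test function --- all of which is sound.
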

\begin{proof}  The uniform convergence of solutions of consistent, monotone schemes follows from the main result of~\cite{BSNum}, provided solutions are unique.  The uniqueness follows in our case (although not in the case $p=1$).   Thus by appealing to this result we need only establish consistency and monotonicity of the schemes.

Consistency of the discretization of the $\IL^{\dx,d\theta}$ operator follows from~\autoref{thm:consistency}.
Consistency of the discretization for~\eqref{pLap} follows since it is a convex combination of $\IL$ and the consistent discretization of the Laplacian~\eqref{LapFD}.  

 The definition~\eqref{ILdxdth} expresses the scheme $\IL^{\dx,d\theta}$ as a nondecreasing combination of differences between $u(x_i) - u(x)$, where $x_i$ are neighbouring grids points to the reference point $x$.  This characterizes \emph{elliptic} schemes, which were proven to be monotone in~\cite{ObermanSINUM}.
Monotonicity of the discretization of the $\pLap$ operator is a consequence of the fact that it is a convex combination of two elliptic schemes is also elliptic, which was also shown in~\cite{ObermanSINUM}.  

Together these results prove convergence.
\end{proof}

\section{Solvers}
When we discretize~\eqref{pLap} we obtain a system of nonlinear equations.
These equations inherit contraction properties from~\eqref{pLap}, namely stability in the maximum norm.   Our first goal is solve the equations, and our second goal is to solve them quickly, ideally in $\bO(N)$ iterations, where $N$ is the number of variables in the discrete system of equations.

The main issues to consider are stability of the iteration, and the convergence rate.
For implicit or semi-implicit schemes, we also need to  solve equations involving the operator.  This generally requires that the implicit operator be linear.

\subsection{Explicit methods}
There are no general methods available for solving non-divergence form nonlinear elliptic equations.   For monotone schemes, explicit methods can be used~\cite{ObermanSINUM}.
The explicit method in the case of~\eqref{pLap} is 
\[
\unn = \un + \dt (\pLap \un - g)
\]
where, to ensure stability, the artificial time step $\dt$ is restricted to be
the inverse of the Lipschitz constant of the scheme, regarded as a grid function.  The Lipschitz constant of the scheme is the inverse of the coefficient of $u(x)$ in the operator evaluated at $x$.  It is $\dt = \bO(\dx^2)$, usually $\dx^2/2$ although if a wide stencil is also used for $\Lap$ is can be made slightly larger.
Explicit methods are slow because of this restriction, which can be regarded as a nonlinear CFL condition.  The number of iterations required for convergence grows with the problem size.

Fully implicit methods require the solution of nonlinear equations.
In the case of~\eqref{pLap}, the fully implicit scheme
$
\frac{\unn - \un  }{\dt} = \pLap[\unn] -\grhs
$
is unconditionally stable, but requires solving the equation
\[
\unn - \rho \pLap \unn = \un - \dt\grhs.
\]
which has the same difficulties as solving the time-independent equation.

\subsection{Semi-implicit methods} 
We are motivated by the work~\cite{Smereka} which built a semi-implicit solver for the one-Laplacian, $\MC$,  by treating the linear part of the operator implicitly.

Write
\[
\IL = \frac 1 2  \left (  \Lap +  ( \IL -  \MC)  \right )
\]
which follows from~\eqref{LapILMC}.  This leads to the semi-implicit scheme

\bq\label{ILsi}
 -  \Lap \unn  =  -  ( \MC - \IL)  \un -   2\grhs.
\eq
The general case for $\pLap$ follows.

\begin{remark}
In practice, we only use the discretization of $\IL$ and the identity~\eqref{LapILMC}, so no discretization of $\MC$ is needed.
\end{remark}

\subsection{The semi-implicit scheme for $\pLap$}

Rewrite the equation~\eqref{pLapId} symmetrically in terms of $p,q$ as follows
\[
\pLap = \frac {p^{-1} + q^{-1}}{2}(\MC+ \IL) + \frac{p^{-1} - q^{-1}}2 ( \MC - \IL), 
\]
which gives
\[
\pLap = \frac 1 2  \left (  \Lap +  {\wb}( \MC - \IL)  \right ), \qquad \wb = 1 - \frac 2 p.
\]

The resulting scheme, which generalizes~\eqref{ILsi}, is given by
\bq\label{SI}\tag{Iteration}
 -\Lap \unn 
 =
\wb ( \IL - \MC) \un - 2\grhs, \qquad \wb = 1 - \frac 2 p.
\eq

The convergence of the iteration depends on whether the operator 
\[
\beta (-\Lap)^{-1}( \IL - \MC)
\]
is a contraction is some norm.  Clearly we can focus on the case
$\abs{\beta} = 1$ since the case  $\abs{\beta} < 1$ is easier.

\subsection{Proof of contraction for a linear model}
It would be desirable to prove that the iteration defined by~\eqref{SI} converges to the solution.  But to do so requires proving it is a contraction in some norm.  The operator involved is not monotone, so we are unable to prove it is a contraction in the uniform norm.  The nonlinearity makes proving convergence in other norms difficult.  However we present a heuristic for why the operator 
$
\Lap^{-1}( \IL - \MC)
$
is a contraction based on an analogy with a linear operator.  Instead of the operator above, we'll perform the analysis for 
\[
{ (-u_{xx} - u_{yy} )}^{-1}{ (-u_{xx} + u_{yy}) }
\]
which is a linear operator which is also a difference of degenerate elliptic operators. 
To be concrete, consider the case of the unit square in two dimensions. Use a simultaneous eigenfunction expansion,
\[
u = \sum_{i} a_{i} \phi_{i}
\]
with
\begin{align*}
M \phi_{i} &= \lambda_i^2 \phi_{i},  \quad \text{ where } M \equiv \partial_{xx}
\\
N \phi_{i} &= \nu_i^2 \phi_{i}, \quad \text{ where } N \equiv \partial_{yy}
\end{align*}
Then 
\[
\frac{ M - N }{ M + N} \phi_{i}  = \frac{ \lambda^2_i - \nu^2_i}{ \lambda^2_i + \nu^2_i} \phi_{i} \le \phi_{i}
\]
So
\[
\frac{ M - N }{ M + N} u = \frac{ M - N }{ M + N} \sum a_{i} \phi_{i}  = \sum a_{i} \frac{ \lambda^2_i - \nu^2_i}{ \lambda^2_i + \nu^2_i} \le \sum a_{i}\phi_{i} = u
\]
In fact, the contraction rate is given by 
\[
\max_{i,j} \frac{ \abs{\lambda^2_i - \nu^2_j}}{ \lambda^2_i + \nu^2_j} < 1, \quad \text{ on a finite grid }
\]
So in this simplified linear setting, the operator is a contraction.

\section{Numerical Results}

\subsection{Plots of solutions of $\IL$ with varying boundary data}
In~\autoref{figUsPlusLin} we plot solutions of the $\infty$-Laplacian, with variable boundary data, given by 
\[
\abs{x}-\abs{y} + c_i \frac{3x + 2y}{8\sqrt{14}}, \quad \text{ for } c_i = 0, 1,2, 3.
\]
Displayed are surface plots of the solution and  a contour plot of the norm of the gradient of the solutions. Note how the changing boundary data moves the kinks in the solution from a symmetric arrangement to a perturbed one.
The computational domain was given by $[-1,1]^2$ with a $200^2$ grid.

\begin{figure}[htbp]
\begin{center}
\includegraphics[width=.49\textwidth]{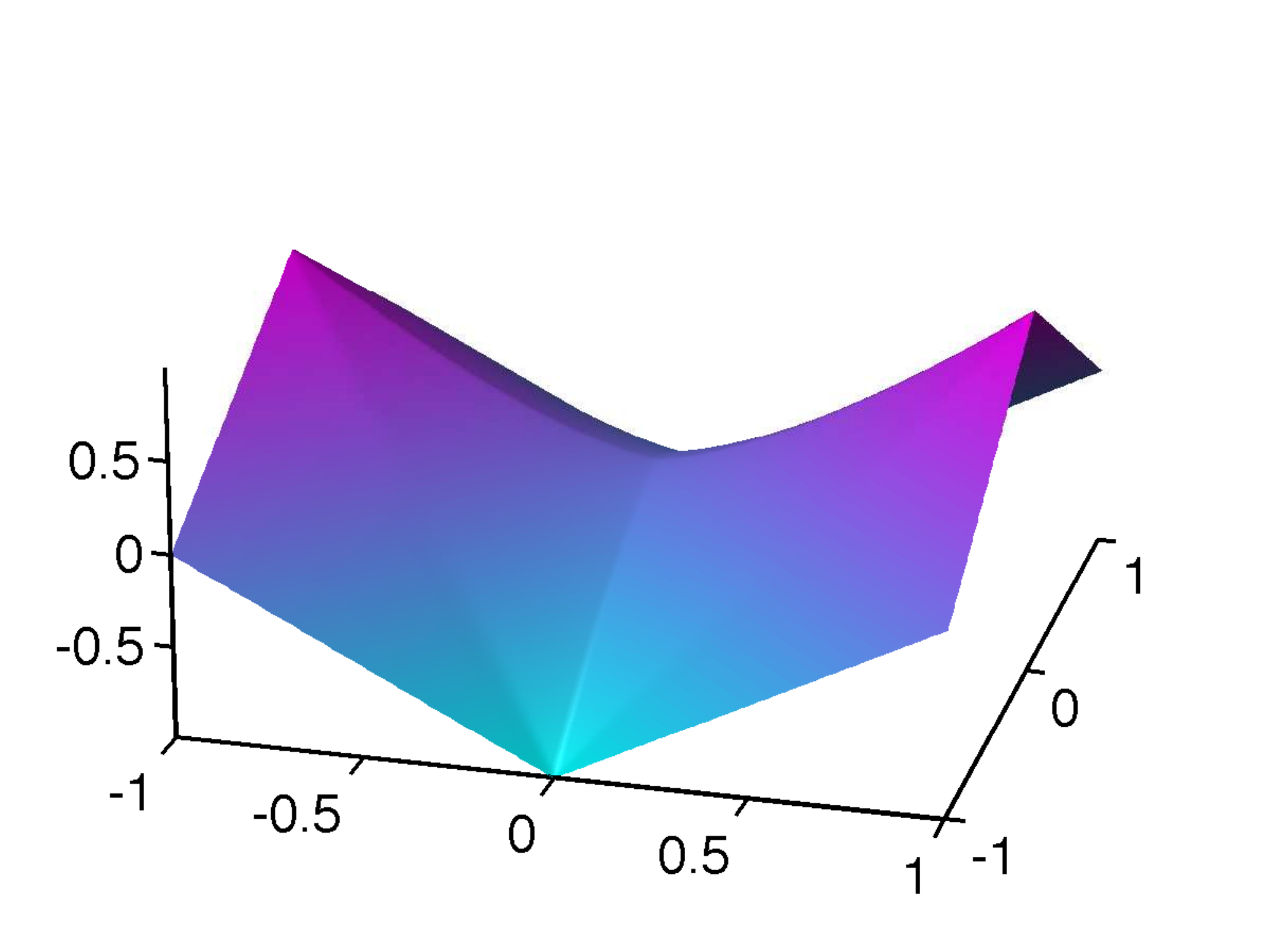}
\includegraphics[width=.49\textwidth]{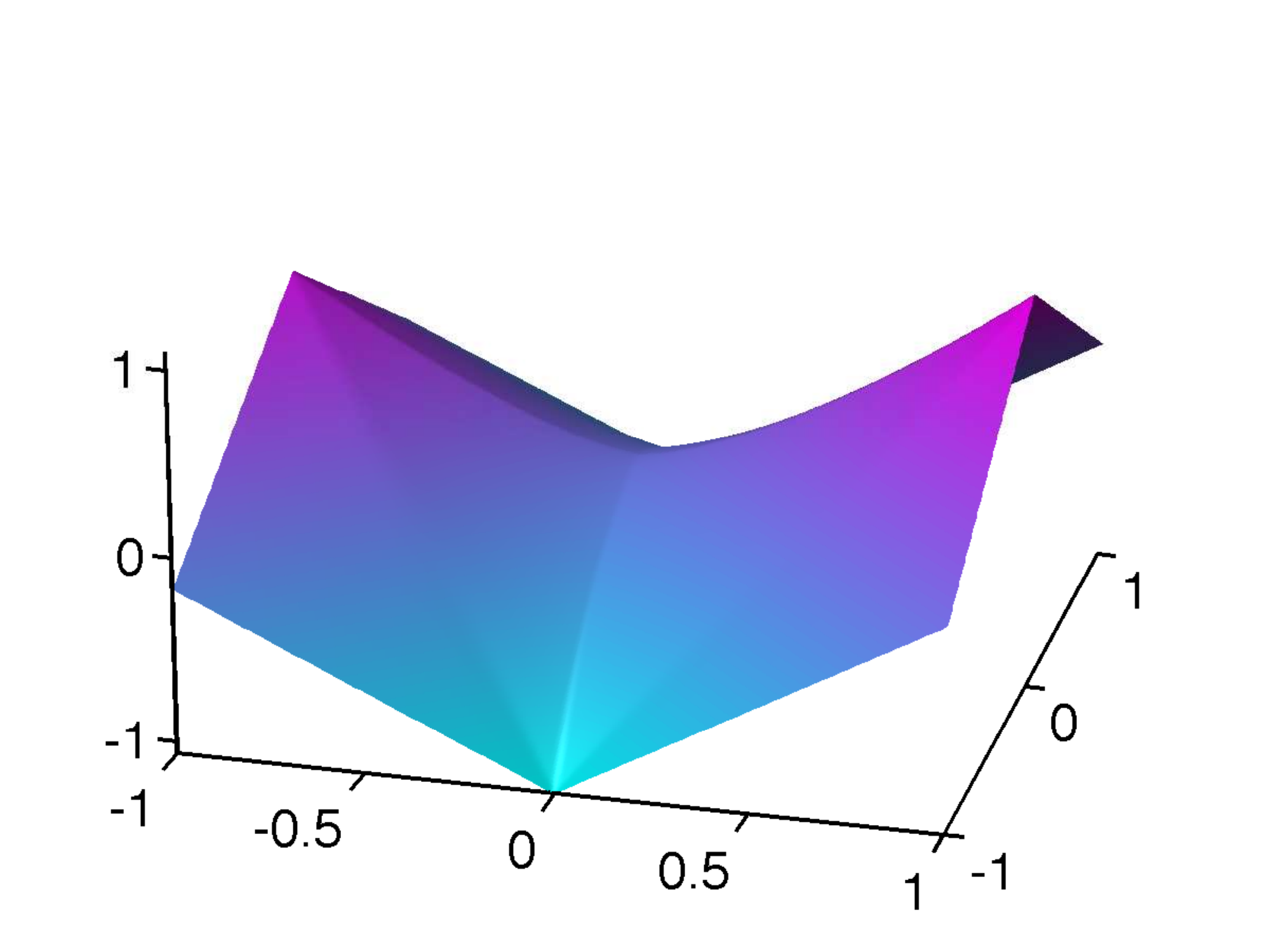}
\includegraphics[width=.49\textwidth]{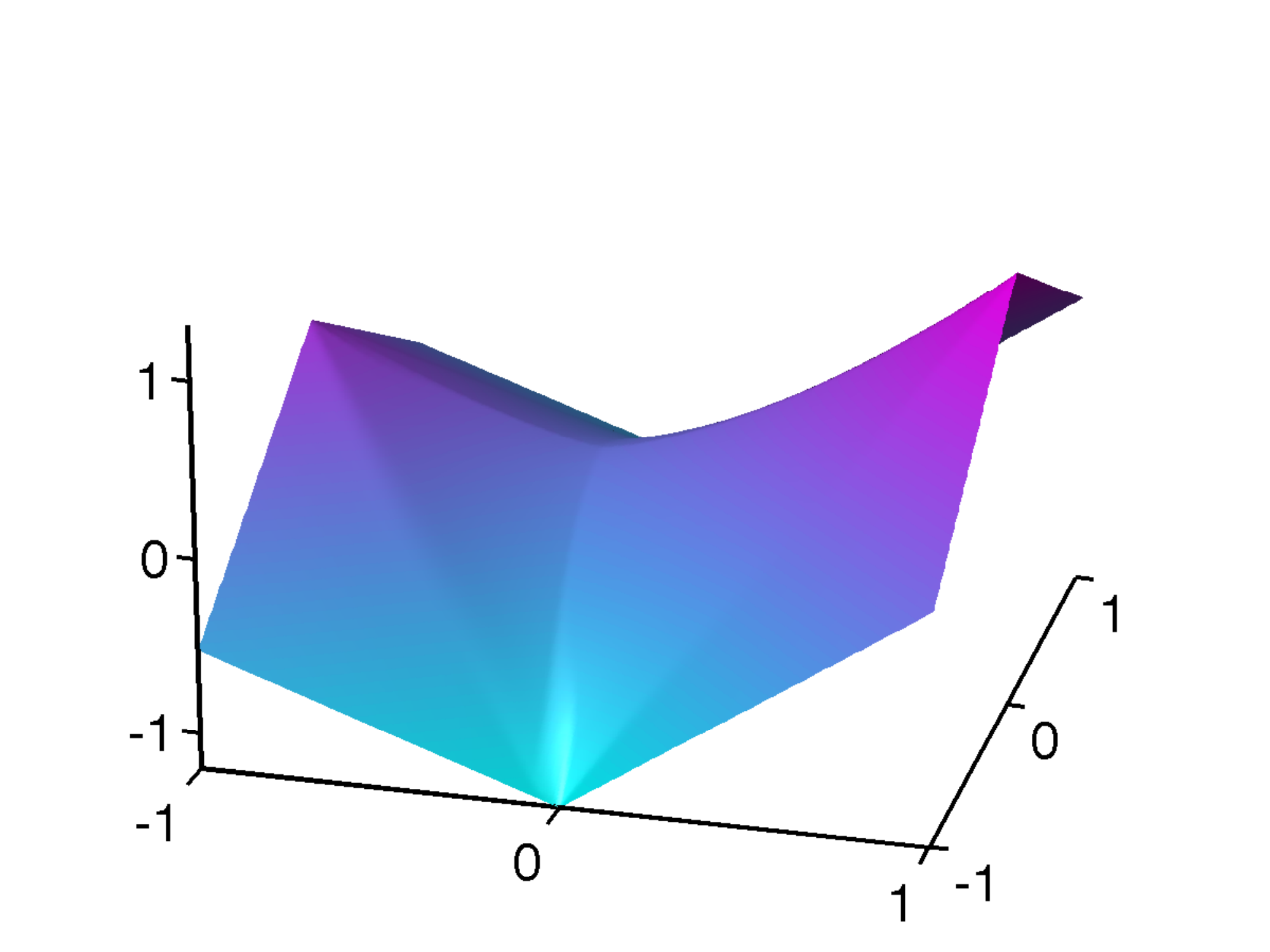}
\includegraphics[width=.49\textwidth]{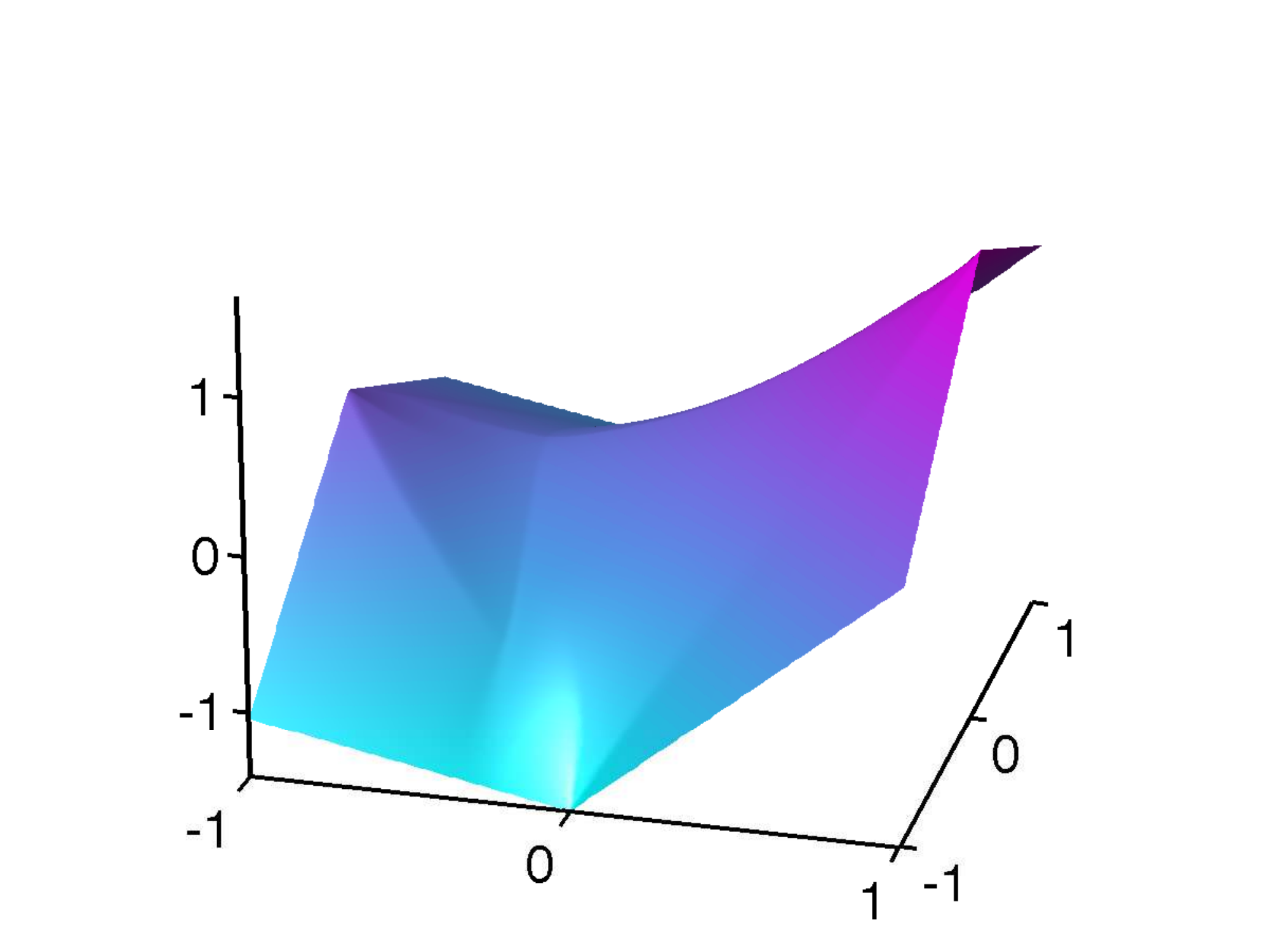}
\includegraphics[width=.85\textwidth]{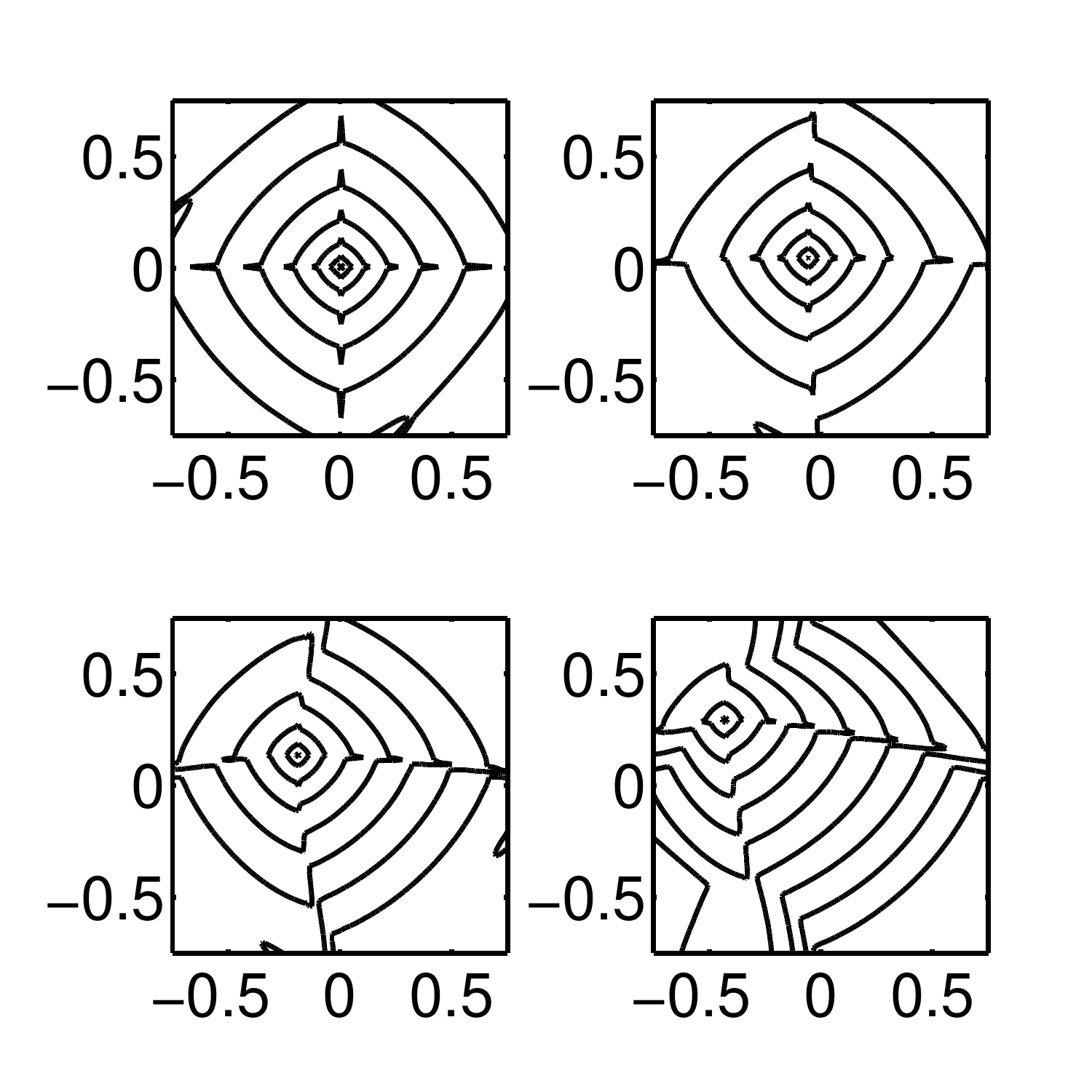}
\caption{
Solutions of the $\infty$-Laplacian.
Boundary data is a  $\abs{x}-\abs{y} + c_i (3x + 2y)/\sqrt{14}$, for  $c_i = 0, 1/8,2 /8, 3/8, 4/8$.
}
\label{figUsPlusLin}
\end{center}
\end{figure}

\subsection{Plots of solutions for varying $p$}
We solved the $p$-Laplacian on a $200^2$ grid, with values of $\wb = 0,1/3,2/3.1$, which corresponds to $p = 2,3,6,\infty$.    The boundary data was  $\abs{x}-\abs{y}$ on $[-1,1]^2$ . 

In \autoref{figUsSurf} we show the surface plots of the solution and  a contour plot of the norm of the gradient of the solution. 
The contours plotted are at levels sets equally spaced between 0 and 1.
 Note in the surface plot, the sharpening of the saddle to a kink in the gradient as $p$ increases.  In the contour plots, the contours get smaller as $p$ increases, showing that the solution is decreasing the size of the gradient.  Note also that the shape of the contours go from circular (for the Laplacian) to diamond shaped (for the Infinity Laplacian). Also note that the gradient in larger along the axes, which is where (presumably) the singularity in the solution is found.

\begin{figure}[htbp]
\begin{center}
\includegraphics[width=.49\textwidth]{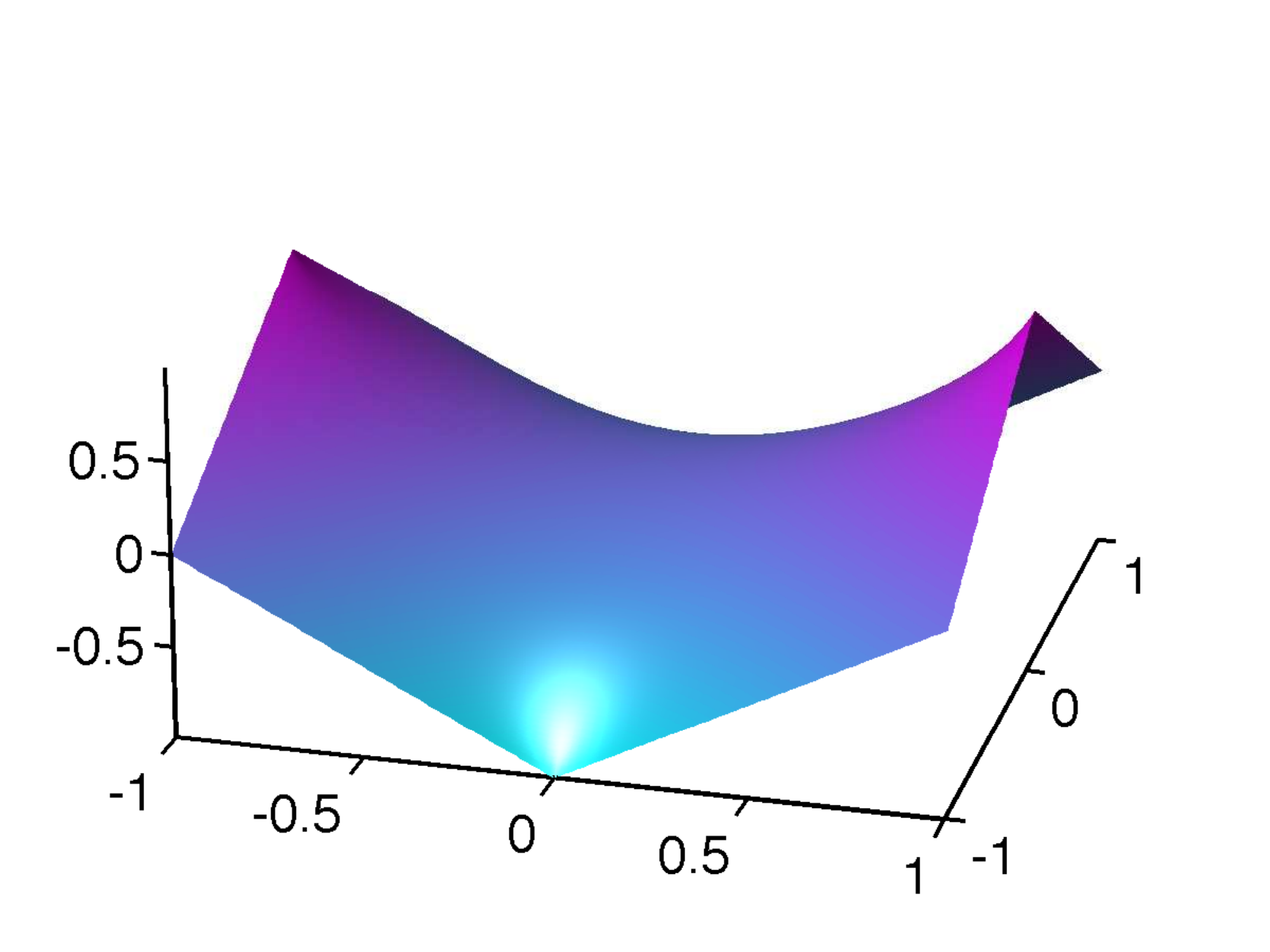}
\includegraphics[width=.49\textwidth]{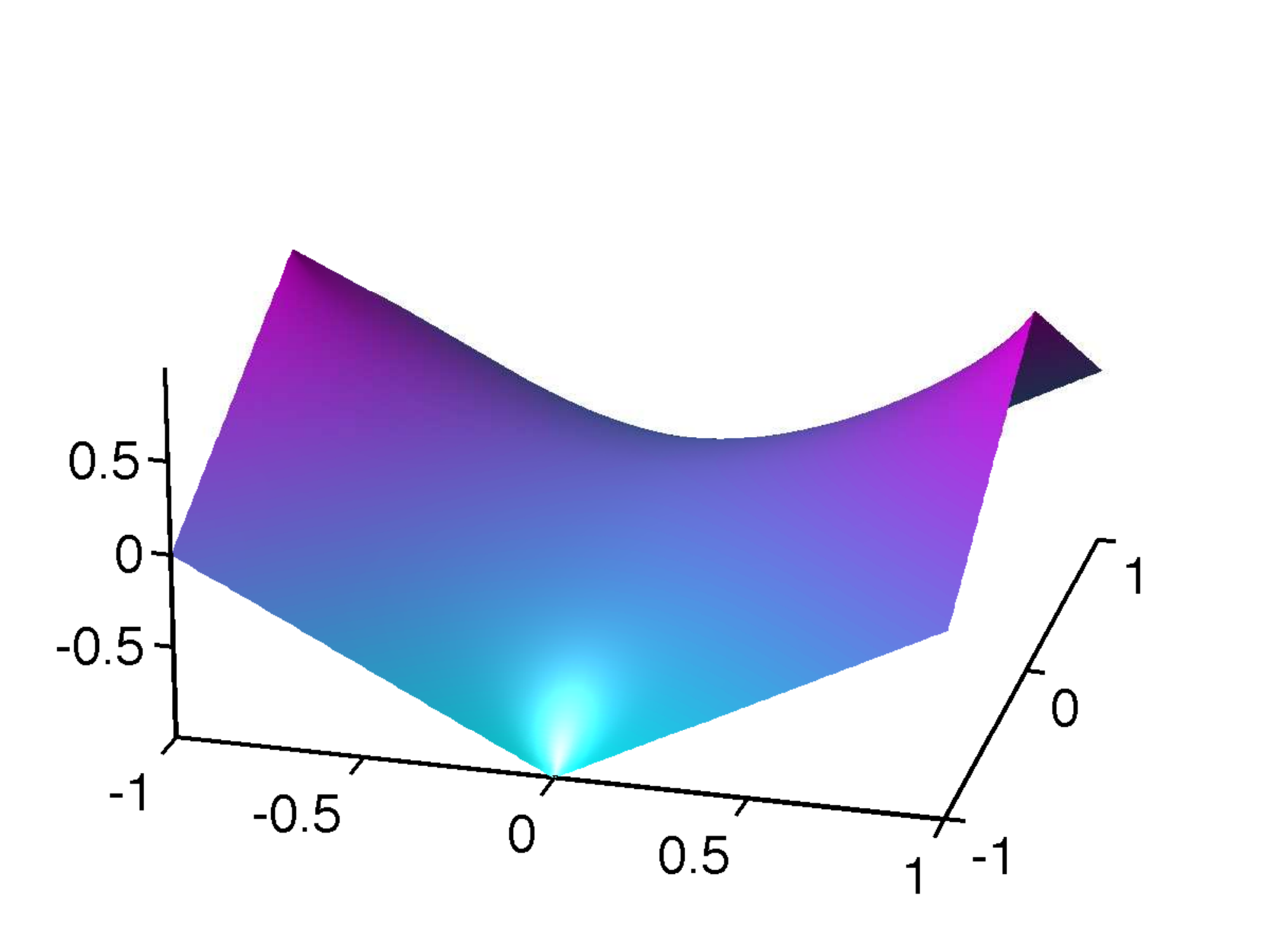}
\includegraphics[width=.49\textwidth]{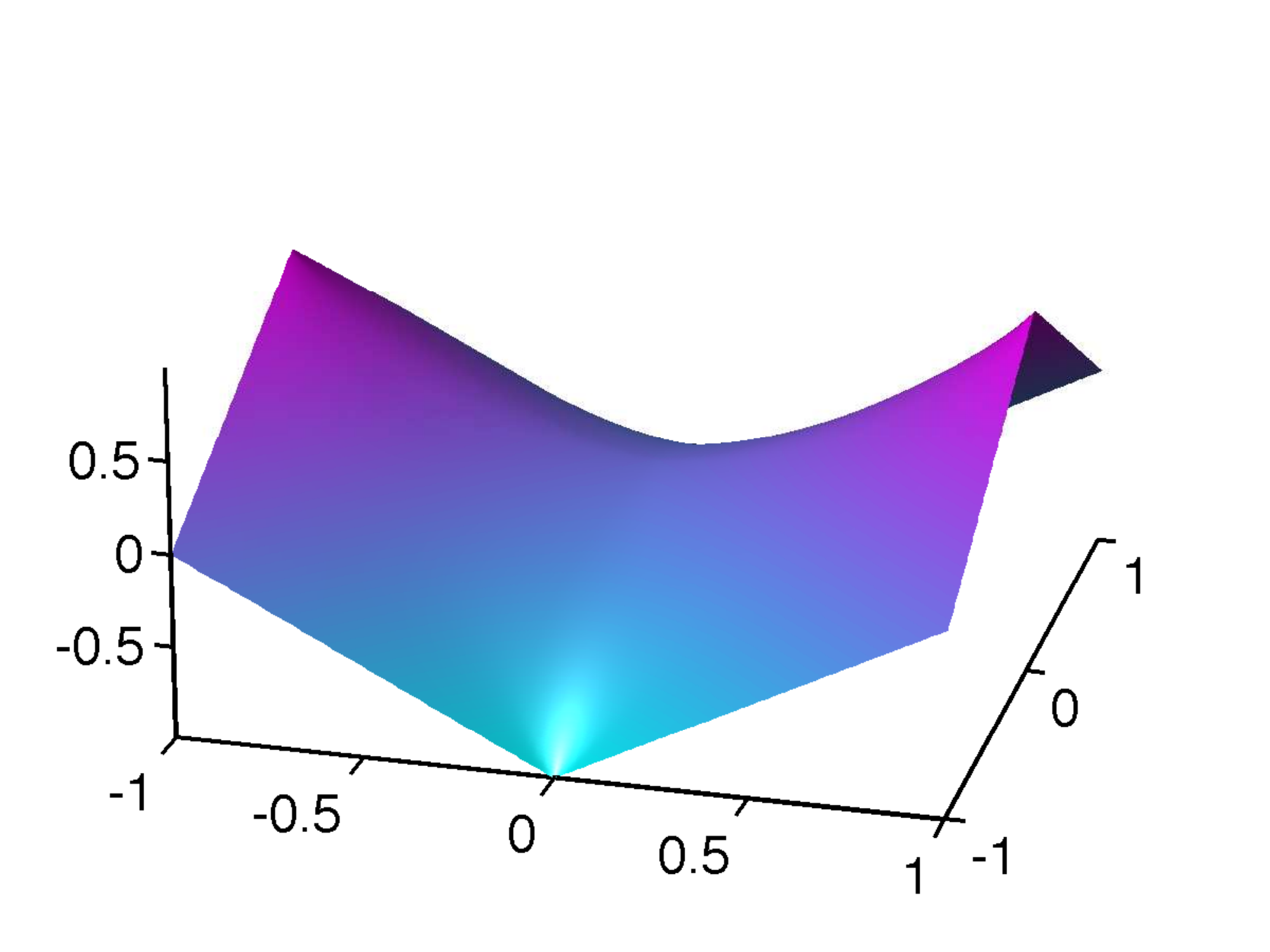}
\includegraphics[width=.49\textwidth]{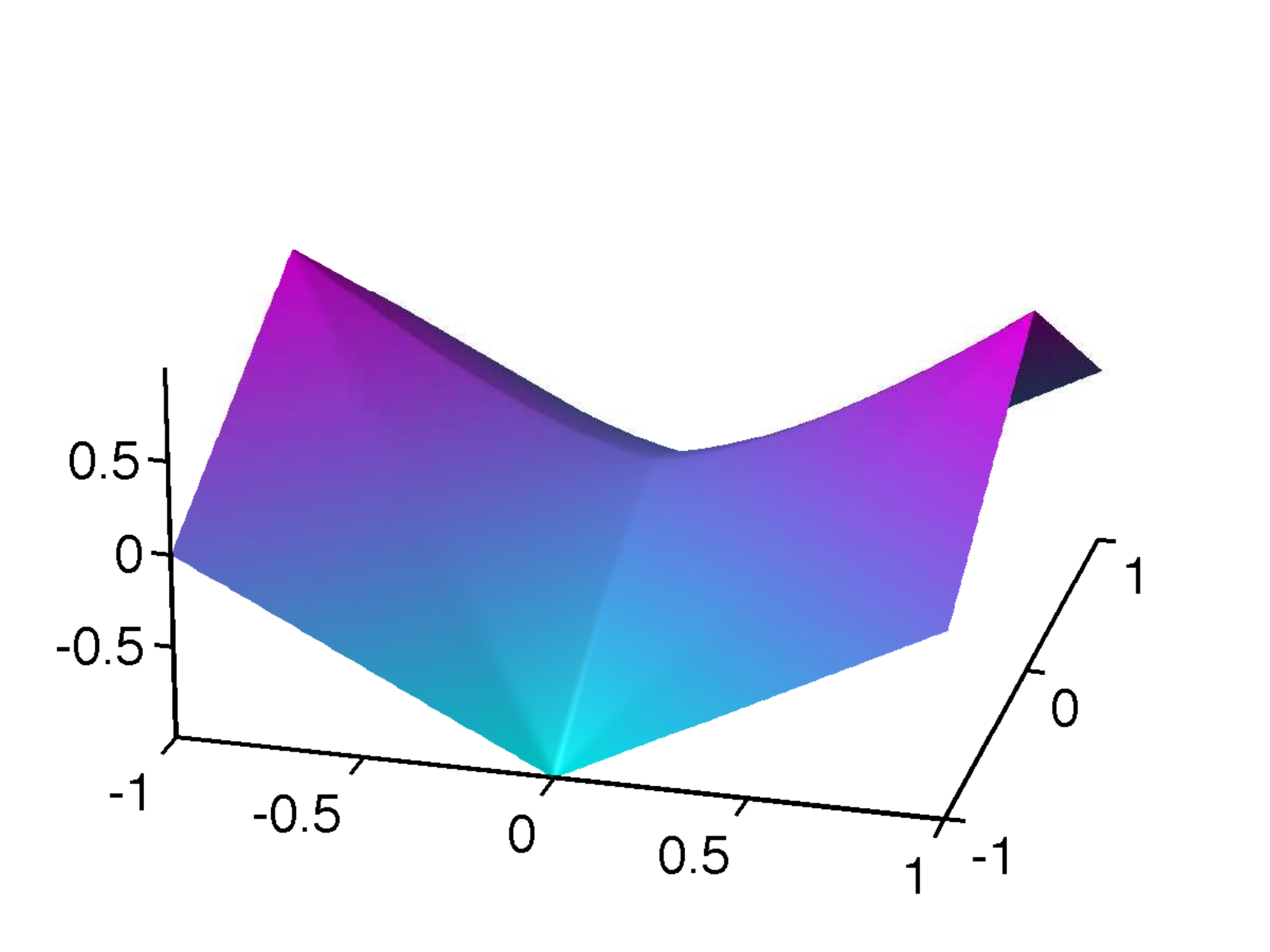}
\includegraphics[width=.85\textwidth]{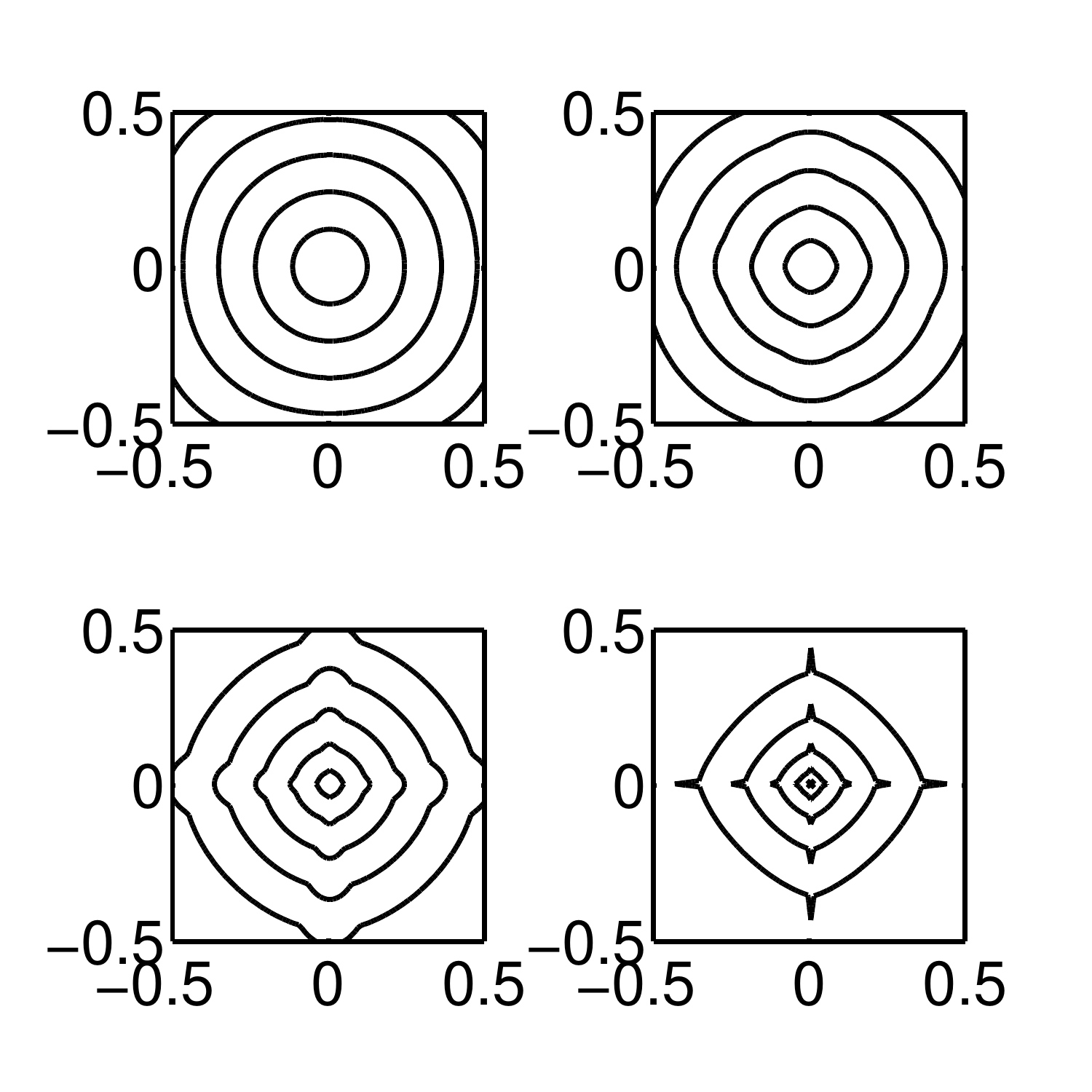}
\caption{
Solution of the $p$-Laplacian with $b = 0,1/3,2/3.1$, giving $p = 2,3,6,\infty$.
Boundary data is $\abs{x}-\abs{y}$. 
(a) Surface plot of the solutions.
(b) Contour plots of the norm of gradient of the solution.}
\label{figUsSurf}
\end{center}
\end{figure}

\section{Solver speed}
In this section we report on computational experiments which demonstrate the performance of the semi-implicit method.

The first section compares the semi-implicit method, which convergences at a rate which is independent of the problems, to the explicit method, which takes increasingly longer to converge as the problem size grow.

The next section gives further details on the solution of $\IL$.  Engineering precision is obtained in the first few iterations.  But numerical precision can still require many iterations.

The final section compares the speed of the solution of the $p$-Laplacian.  Setting $p< \infty$ improves the convergence rate of the solver.

\subsection{Speed of the explicit method.}  
The explicit method converges exponentially, but at a rate which depends on the problem size.
For small problem sizes, $n = 64$, we get an error of .1 after 200 iterations, and .001 after 1000 iterations.  But for large problems sizes, $n=512$, the error is $.7$ even after 1000 iterations.  This is prohibitively slow for larger problems sizes.

In~\autoref{figErrorsILExplicit} the error is shown as a function of the number of iterations, for different problem sizes.  Increasing the problem size means the error increases as well, for a fixed number of iterations.

We compared the semi-implicit method with the explicit method.
The error decreases faster for the semi-implicit method.  See \autoref{figErrorsILn128} for a typical example.

\begin{figure}[htbp]
\begin{center}
{
\includegraphics[width=.5\textwidth]{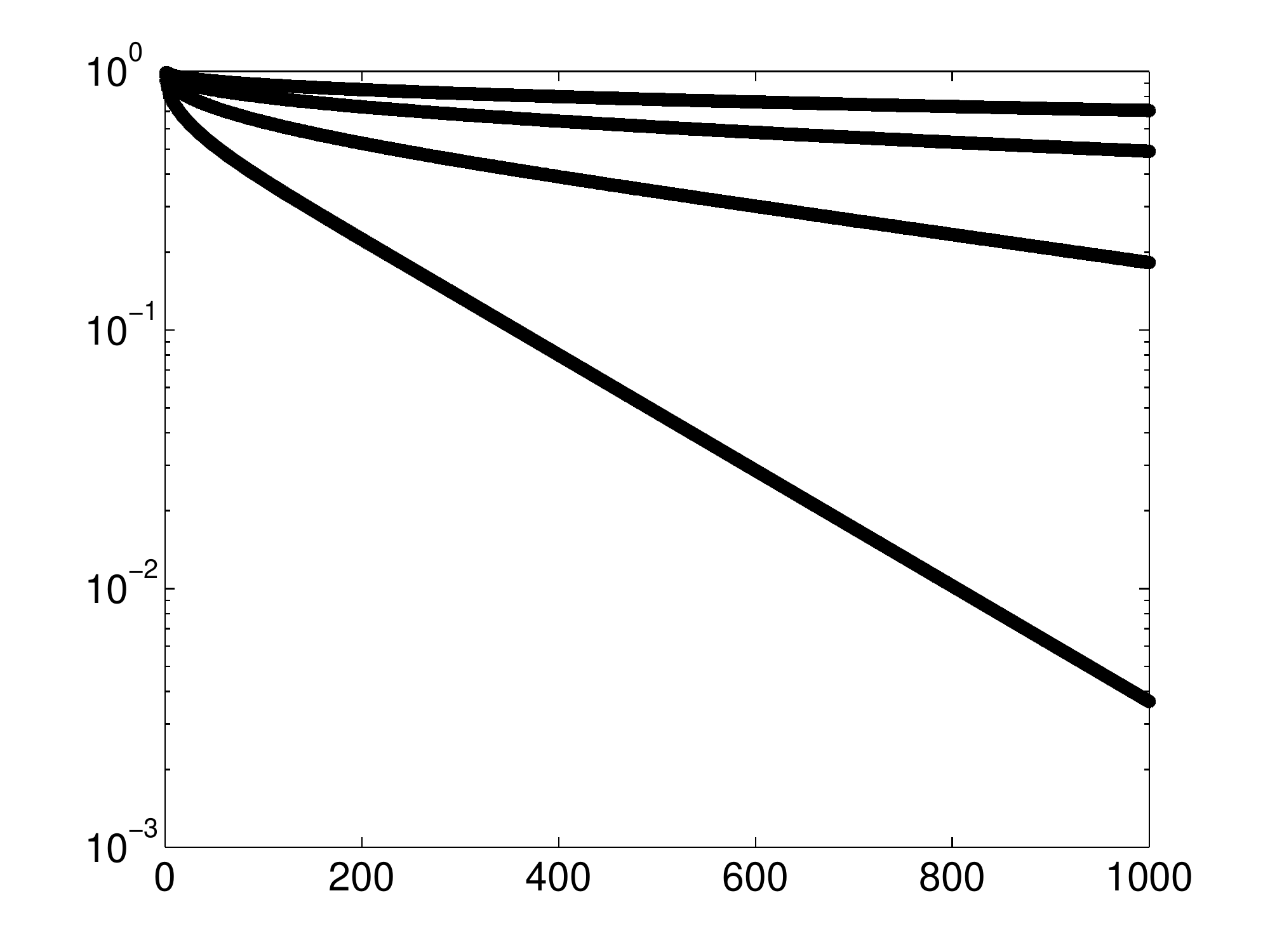}}
\caption{Maximum error as a function of the number of iterations in the solution of $\IL$ using the explicit method.   For $n = 64, 128, 256, 512$.   Errors are increasing with $n$.}
\label{figErrorsILExplicit}
\end{center}
\end{figure}

\begin{figure}[htbp]
\begin{center}
\includegraphics[width=.5\textwidth]{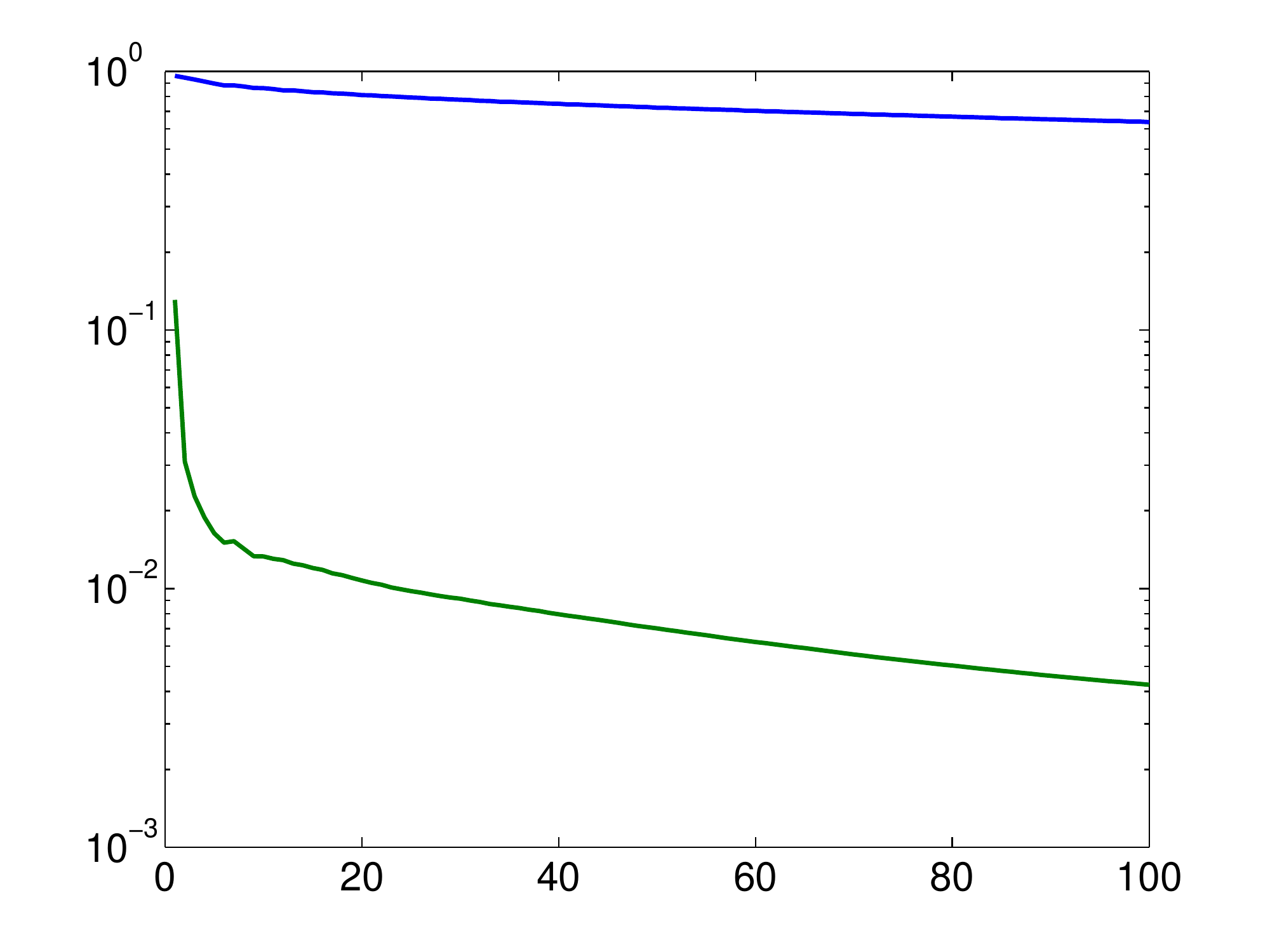}
\caption{Maximum error as a function of iterations for solutions of Infinity Laplace.  
Semi-implicit method (better accuracy) and Explicit method (less accurate), with $n=128$. Solution is $x^{4/3} - y^{4/3}$.}
\label{figErrorsILn128}
\end{center}
\end{figure}

\subsection{Solver speed for Infinity Laplace}
In the section we examine the performance of the semi-implicit method for solutions of Infinity Laplace.  

The advantage of the semi-implicit solver is that the convergence rate does not depend on the size of the problem.
Another advantage is that the semi-implicit solver reduces the error to engineering tolerances, $.1$, after one iteration, and achieves close to another order of magnitude after four iterations.  See \autoref{figErrorsIL}.   This is useful for applications, for example in imaging,  where high accuracy is not required.

\begin{figure}[htbp]
\begin{center}
{
\includegraphics[width=.5\textwidth]{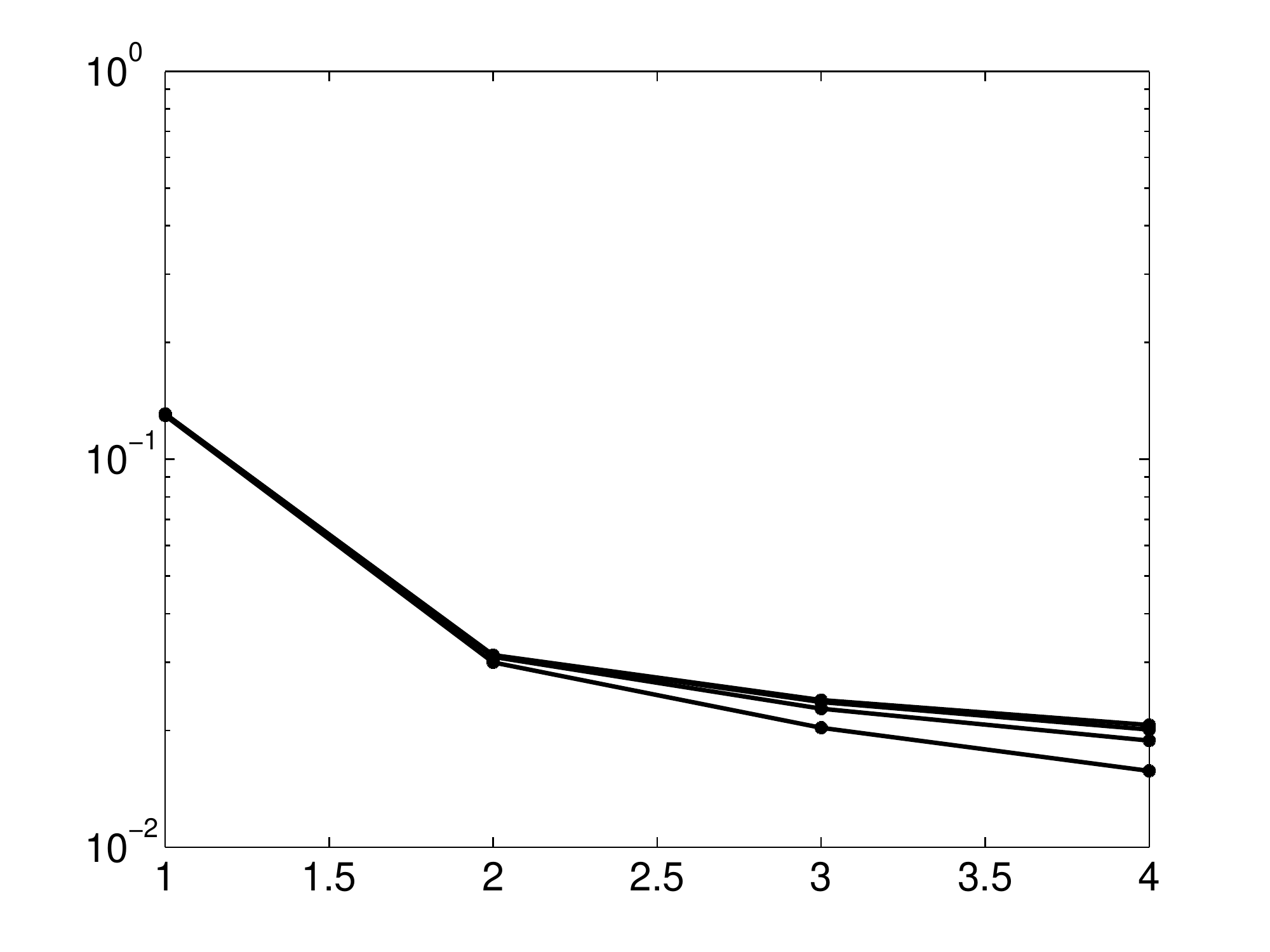}
}\caption{Maximum error as a function of the number of iterations in the solution of $\IL$ using the semi-implicit method. For $n = 64, 128, 256, 512$. Errors are nearly independent of $n$.}
\label{figErrorsIL}
\end{center}
\end{figure}

However, the specific rate of convergence varies for different solutions.
Comparing several solutions, it appears to be slowest for the solution $u(x,y) = x^{4/3} - y^{4/3}$, see \autoref{figErrorsIL3solns}

\begin{figure}[htbp]
\begin{center}
\includegraphics[width=.48\textwidth]{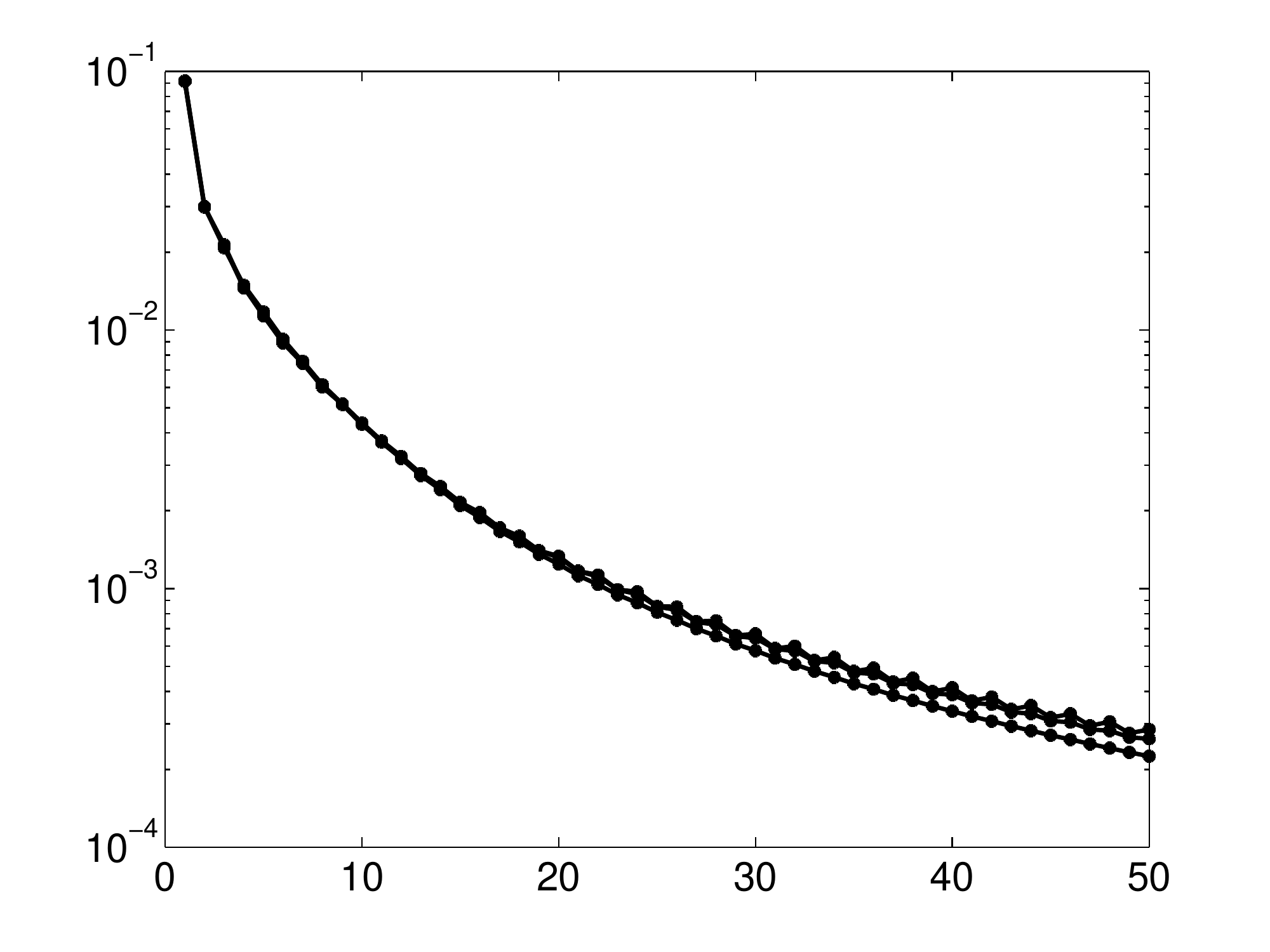}
\includegraphics[width=.48\textwidth]{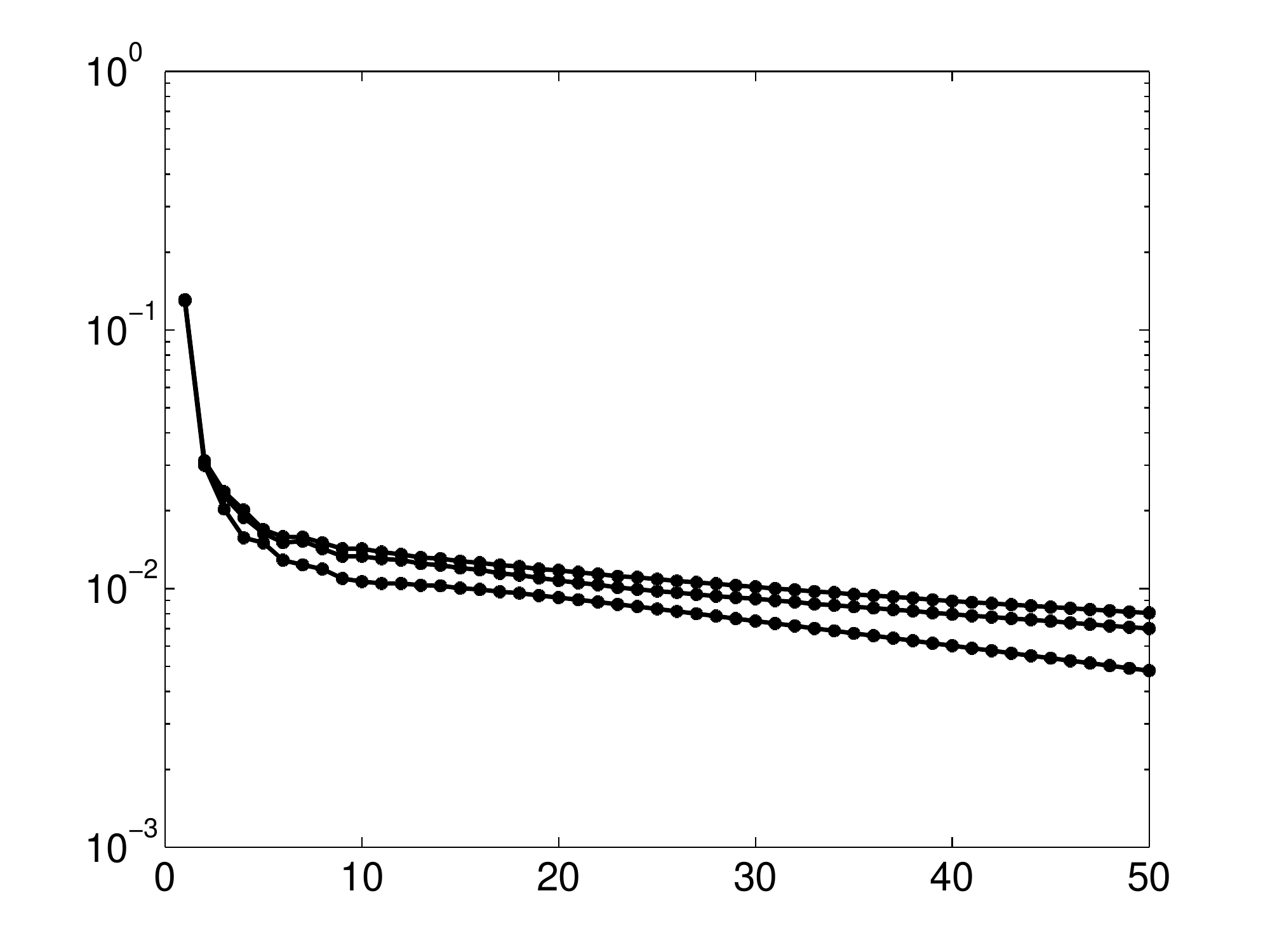}
\includegraphics[width=.48\textwidth]{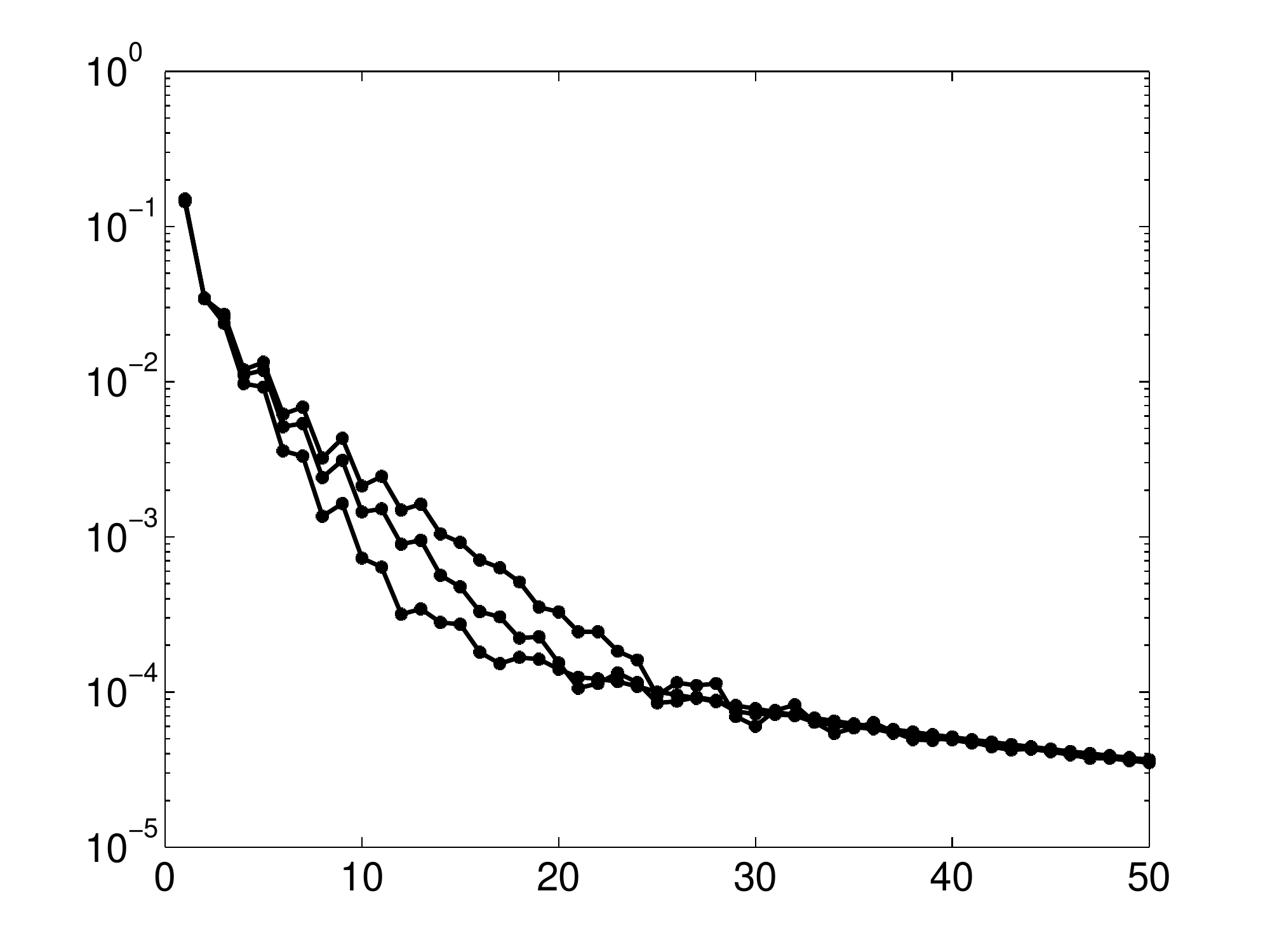}
\caption{Maximum error as a function of iterations for solutions of Infinity Laplace.  For $n = 64, 128, 256$.  Semi-implicit method. using exact solution of (a) $x^2 - y^2$ . (b) $x^{4/3} - y^{4/3}$ (c) $\abs{x} - \abs{y}$
}
\label{figErrorsIL3solns}
\end{center}
\end{figure}

\subsection{Solver speed for solutions of $p$-Laplacian}
We study the convergence rate of the iterative method for the $p$-Laplacian with $p<\infty$.  We also study the convergence rate in terms of problem size.  One notable fact is that the accuracy after just a few iterations is to engineering tolerances.

First we display the speed of convergence in terms of the number of iterations for different values of $\wa$, where $\wa = 1/p$ as given in~\eqref{plap2}.

The results which study the convergence rate for a particular example solution as a function of $\wa$ is presented in~\autoref{figConv} and~\autoref{figRate}.
 The convergence rate is independent of the size of the problem.   The convergence increases with $\wa$,
 which is to be expected, since the operator $\pLap$ is becoming better approximated by the Laplacian.   The convergence rate is consistent with an exponential error proportional to 
$10^{\mu(\wa) N}$, where 
the convergence rate $\mu(\alpha)$ is consistent with an affine expression
\[
\mu(\wa) =  -(c_1 + c_2 \alpha)  
\]
where $c_1$ is small and   
and $c_2 \approx 1$.


\begin{figure}[htbp]
\begin{center}
{ \includegraphics[width=.5\textwidth]{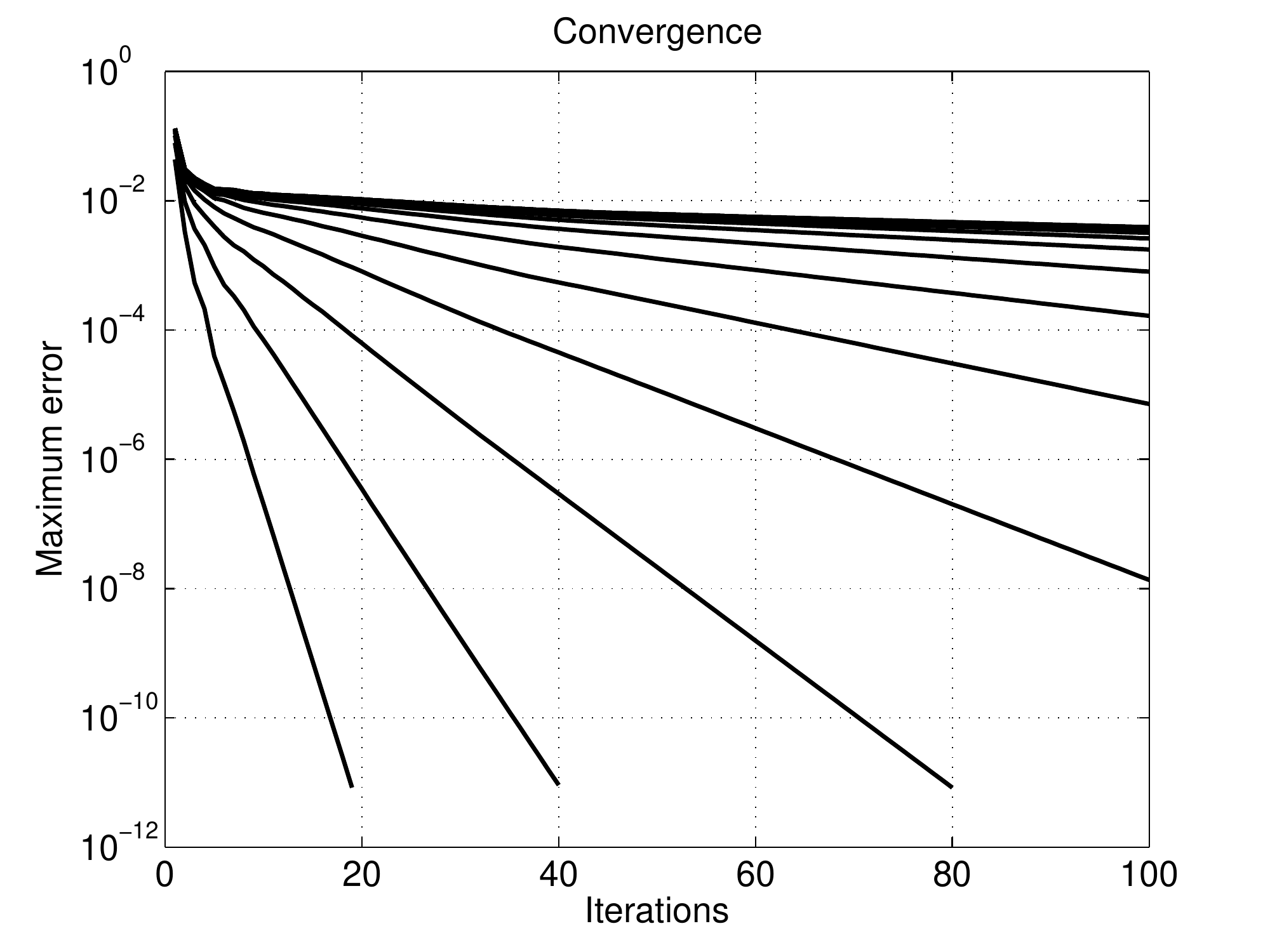}}

\caption{Convergence in the maximum norm as a function of the number of iterations, for $\wa = 1/2, 1/2^2, \dots, 1/2^{18}, 0$.
}
\label{figConv}
\end{center}
\end{figure}

\begin{figure}[htbp]
\begin{center}
{ \includegraphics[width=.5\textwidth]{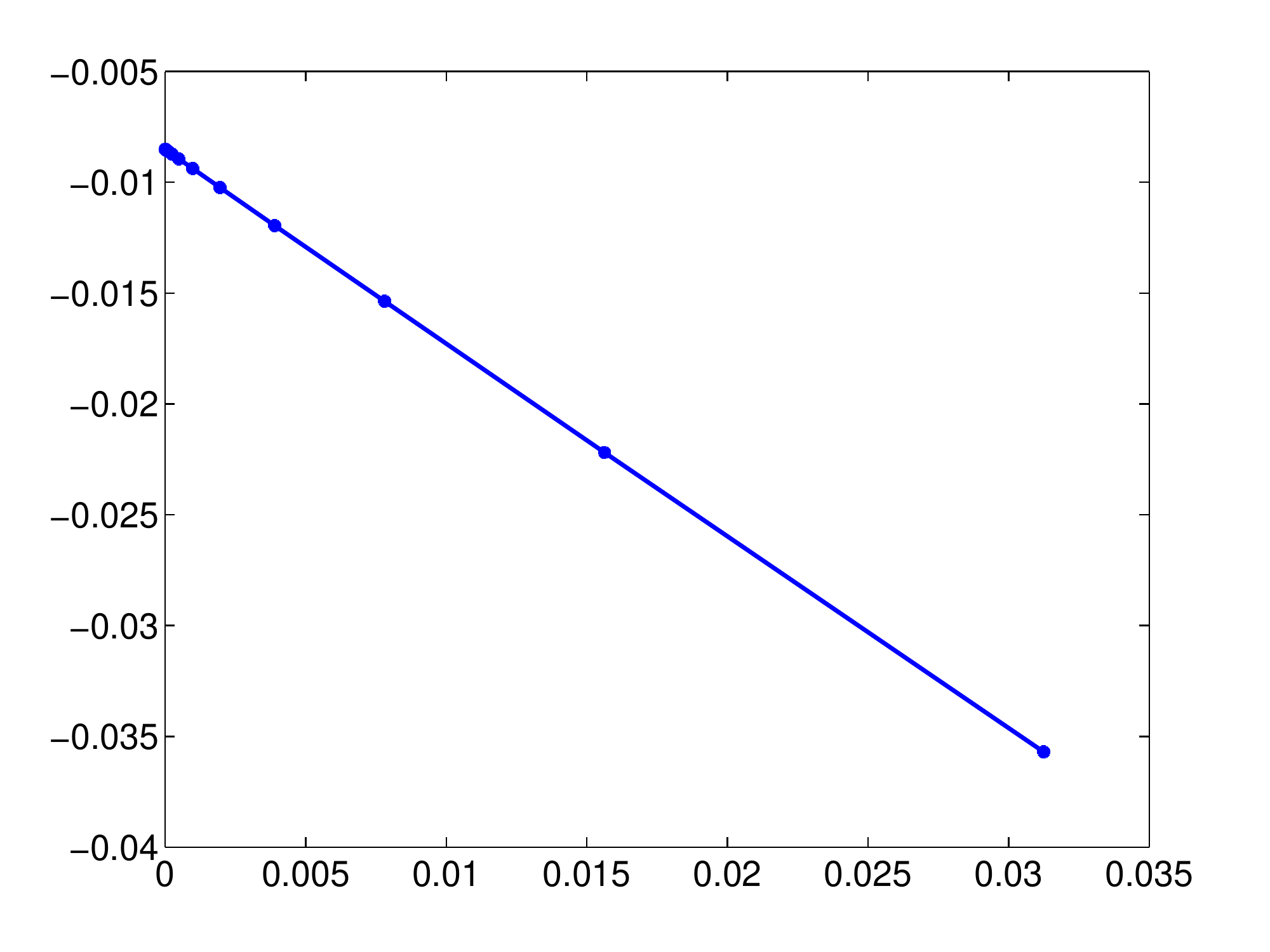}}

\caption{Convergence rate 
as a function of $\wa$ for
$\wa = 1/2, 1/2^2, \dots, 1/2^{18}, 0$.
The line of best fit is $-.87\wa - .0085$.
}
\label{figRate}
\end{center}
\end{figure}

%
%
%
%
%
%
%
%
\section{Conclusions}
We built convergent discretizations and fast solvers for the $\IL$ and $\pLap$ operators.

Since solutions of the PDE~\eqref{pLap} can be singular, it is important to use a convergent discretization.  The theory of viscosity solutions provides the appropriate notion of weak solutions for the PDE~\eqref{pLap}.   We simplified and generalized the Wide Stencil finite difference scheme for $\IL$ used in~\cite{ObermanIL} to build a monotone finite difference scheme for  $\pLap$.   The discretization took advantage of the fact the the $\pLap$ operator is a positive combination of $\Lap$ and $\IL$, given by~\eqref{plap2}.

A convergent discretization provides a finite dimensional equation whose solutions approximate the solutions of the PDE.  Proving convergence is established using PDE techniques, which does not address efficiency of solution methods.  Standard explicit methods are slow, in the sense that the number of iterations required to approximately solve the equations to a given level of precision grows with the problem size.  We introduced a semi-implicit solver, which converged at a rate independent of the problem size.  The idea for the solver is to approximate the operator $\pLap$ by the Laplacian, and solve a linear equation at each iteration.  While each iteration is more costly than the explicit method, the added cost is more than made up for by the fact that accurate solutions can be obtained in a few iterations.

Numerical results show that the method converges exponentially, at a rate which is independent of the problem size, even in the case $p = \infty$.
However, the convergence rate increases approximately linearly with $1/p $, which is to be expected, since as $p \to 2$ we recover the Laplacian and the solution is obtained in one step.  
For $\wa = 0$ or nearby,  engineering precision (maximum error of $10^{-2}$) is obtained in a few iterations, but the convergence to numerical precision is slower.  
For $\wa$ away from 0, the method is very fast, with numerical precision in about 20 iterations.
The convergence rate also depends on the particular solution.

\bibliographystyle{alpha}
\bibliography{pLap}

\end{document}